\newtheorem{corollary}{Corollary}[section]
\newtheorem{definition}{Definition}[section]
\newtheorem{lemma}{Lemma}[section]
\newtheorem{theorem}{Theorem}[section]
\newenvironment{thmbis}[1]
 {%
 \addtocounter{theorem}{-1}%
 \begin{theorem}}
 {\end{theorem}}
\newtheorem{problem}{Problem}[section]
\newtheorem{example}{Example}[section]
\newtheorem{construction}{Construction}[section]
\newtheorem{claim}{Claim}[section]
\newtheorem{remark}{Remark}[section]
\newtheorem{question}{Question}[section]
\definecolor{cream}{RGB}{203, 237, 204}
\begin{document}

\title{Separating hash families with large universe}

\author{Xin~Wei,~Xiande~Zhang,~and~Gennian~Ge
        \thanks{X. Wei ({\tt weixinma@mail.ustc.edu.cn}) is with the School of Mathematical Sciences, University of Science and Technology of China, Hefei, 230026, Anhui, China.}

\thanks{X. Zhang ({\tt drzhangx@ustc.edu.cn}) is with the School of Mathematical Sciences,
University of Science and Technology of China, Hefei, 230026, and with Hefei National Laboratory,  Hefei, 230088, Anhui, China.  The research of X. Zhang is supported by the NSFC
under Grants No. 12171452 and No. 12231014, the Innovation Program for Quantum Science and Technology
(2021ZD0302902) and the National Key Research and
Development Program of China (2020YFA0713100).}

\thanks{G. Ge ({\tt gnge@zju.edu.cn}) is with the School of Mathematical Sciences, Capital Normal University, Beijing, 100048, China. The research of G. Ge was supported by the National Key Research and Development Program of China under Grant 2020YFA0712100 and Grant 2018YFA0704703, the National Natural Science Foundation of China under Grant 11971325 and Grant 12231014, and Beijing Scholars Program.
}
\thanks{
Declarations of interest: none.}
}
\maketitle

\begin{abstract}
 Separating hash families are useful combinatorial structures which generalize
several well-studied objects in cryptography and coding theory. Let $p_t(N, q)$  denote the maximum size of universe for a $t$-perfect hash
family of length $N$ over an alphabet of size $q$. In this paper, we show that $q^{2-o(1)}<p_t(t, q)=o(q^2)$ for all $t\geq 3$, which answers an open problem about separating hash families raised by Blackburn et al. in 2008 for certain parameters.  Previously, this result was known only for $t=3, 4$. Our proof is obtained by establishing the existence of a large set of integers avoiding nontrivial solutions to a set of correlated linear equations.

\end{abstract}

\begin{IEEEkeywords}
\boldmath separating hash family, perfect hash family, solution-free set.
\end{IEEEkeywords}

\section{Introduction}\label{sec_introduction}

Let $X$ and $Y$ be two sets with sizes $n$ and $q$, respectively. An $(N; n, q)$-{\it hash family} is a family $\mathcal F$ of $N$ functions from $X$ to $Y$, while the sets $X$ and $Y$ are called the {\it universe} and the {\it alphabet} of $\mathcal F$, respectively. For some pairwise disjoint subsets $C_1, \ldots, C_t\subset X$, we say a function $f\in \mathcal F$ {\it separates} $C_1, \ldots, C_t$ if $f(C_1), \ldots, f(C_t)$ are pairwise disjoint. An $(N; n, q)$-hash family  $\mathcal F$ is called an $(N; n, q, \{w_1, \ldots, w_t\})$-\emph{separating hash family}, denoted by $SHF(N; n, q, \{w_1, \ldots, w_t\})$, if for all pairwise disjoint subsets $C_1, \ldots, C_t\subset X$ with $|C_i|=w_i\geq 1$ for $1\le i\le t$, there exists at least one function $f\in\mathcal F$ that separates $C_1, \ldots, C_t$. We call the multiset $\{w_1, \ldots, w_t\}$ the {\it type}  of the separating hash family $\mathcal F$. Note that
an $(N; n, q)$-hash family can be depicted as an $N\times n$ array $\mathcal A$ (\emph{matrix representation}) with rows indexed by $\mathcal F$, columns indexed by $X$ and with elements from $Y$, or can be viewed as a $q$-ary code of length $N$ and size $n$ by collecting all columns in $\mathcal A$ (\emph{code representation}).

The notion of separating hash family (SHF) was first introduced  by Stinson et al. in~\cite{stinson2008generalized} as a generalization of several well-studied classes of combinatorial objects, such as frameproof codes~\cite{boneh1998collusion, stinson2000secure} and parent-identifying codes~\cite{hollmann1998codes}.
Especially when $w_1= w_2= \cdots =w_t=1$, an $SHF(N; n, q, \{1, \ldots, 1\})$ is known as a $t$-perfect hash family (PHF), and denoted by $PHF(N; n, q, t)$.  PHF was first introduced by Mehlhorn~\cite{mehlhorn2013data} in 1984 and has applications in cryptography~\cite{stinson2000new, walker2007perfect}, database management~\cite{mehlhorn2013data}, designs of circuits~\cite{newman1995lower} and algorithms~\cite{alon1996derandomization}. Further when $q=t$, a $PHF(N; n, q, q)$ is known as a $q$-hash code \cite{fredman1984size,korner1986fredman}, the code rate of which has been  studied recently \cite{guruswami2022beating,xing2023beating}.

 Given integers $N$, $q$ and $w_1, \ldots, w_t$, denote the maximum size of the universe $n$ as $C(N, q, \{w_1, \ldots, w_t\})$, such that there exists an $SHF(N; n, q, \{w_1, \ldots, w_t\})$.  The study of the value of  $C(N, q, \{w_1, \ldots, w_t\})$ is a fundamental problem in combinatorics, information theory, and computer science. Many efforts have been made to determine the bounds of it for different cases, see for example, \cite{bazrafshan2011bounds, blackburn2008bound, shangguan2016separating, ge2019some}.

When $N$ and $w_1, \ldots, w_t$ are fixed, $C(N, q, \{w_1, \ldots, w_t\})$ can be regarded as a function of $q$. Let $u:=\sum_{i=1}^t w_i$ (Throughout the paper $u$ always refers to this formula). A general upper bound \[C(N, q, \{w_1, \ldots, w_t\})\leq \gamma q^{\left\lceil \frac{N}{u-1}\right\rceil}\] has been obtained by Blackburn et al. in \cite{blackburn2008bound}, where $\gamma=w_1w_2+u-w_1-w_2$ with $w_1,w_2\leq w_i$ for $3\leq i\leq t$.  The constant $\gamma$  has been improved  by several authors, see \cite{bazrafshan2011bounds,shangguan2016separating}.
As for the lower bound, Blackburn~\cite{blackburn2000perfect} used the Lov\'{a}sz local lemma and gave a probabilistic construction, which showed that
$$C(N, q, \{w_1, \ldots, w_t\})> \frac 1 {2^u}\left(\frac{q}{\binom u 2}\right)^{\frac N {u-1}}=\Omega_{u, N}(q^{\frac N {u-1}}).$$
This means that when $(u-1)\mid N$, the exponent $\lceil N\slash(u-1)\rceil$ in the upper bound of $C(N, q, \{w_1, \ldots, w_t\})$ matches the lower bound. Especially for $N=u-1$, it has been proved that $C(u-1, q, \{w_1, \ldots, w_t\})=(u-1)q-o(q)$ \cite{bazrafshan2011bounds}. However, it is still an open problem to determine whether the exponent is tight when $(u-1)\nmid N$.
\begin{question}\label{question_blackburn}
(See \cite{blackburn2008bound}.)
Let $N$ and $w_i$ be fixed positive integers. If $(u-1)\nmid N$, then for sufficiently large $q$ and arbitrarily small $\epsilon>0$, does there exist an $SHF(N; n, q, \{w_1, \ldots, w_t\})$ such that $n\ge q^{\lceil N\slash (u-1)\rceil-\epsilon}$?
\end{question}

Note that  the general upper and lower bounds of SHFs above are similar to those for the classical hypergraph Tur\'{a}n problem introduced by Brown, Erd\H{o}s and S\'{o}s \cite{brown1973some} in the early
1970s. Let $f_r (n, v, e)$ denote the maximum number of edges in an $r$-uniform hypergraph on $n$ vertices, in which the union of any $e$ distinct edges contains at least $v + 1$ vertices. It was shown \cite{brown1973some} in general that $\Omega(n^{\frac{er-v}{e-1}})=f_r(n,v,e)=O(n^{\lceil\frac{er-v}{e-1}\rceil})$, and conjectured that  $n^{k-o(1)}<f_r(n,v,e)=o(n^k)$ holds when $v=er-(e-1)k+1$ for all fixed integers $r>k\geq 2$ and $e\geq 3$. Several sporadic cases were proved to be true, such as when $e=3$ \cite{alon2006extremal} or when $r=3$ \cite{solymosi2017small}, including the famous $(6,3)$-theorem of Ruzsa
and Szemer\'{e}di~\cite{ruzsa1978triple}.

Denote $p_t(N, q):=C(N, q, \{1, \ldots, 1\})$, in which there are $t$ copies of $1$ in its type. The first breakthrough  regarding to Question~\ref{question_blackburn} was obtained in~\cite{shangguan2016separating} for the PHFs when $N=u=t$,  which also answered an open problem of~\cite{walker2007perfect}.  In fact, they obtained the following result which is similar to the $(6,3)$-theorem.


\begin{theorem}\label{thm_old_PHF_bounds}
(See \cite{shangguan2016separating}.)
For large enough $q$, $q^{2-o(1)}<p_t(t, q)=o(q^2)$ for both $t= 3, 4$.
\end{theorem}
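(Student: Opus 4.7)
The plan is to establish the upper and lower bounds on $p_t(t, q)$ separately, via a reformulation of $PHF(t; n, q, t)$ as a $t$-partite $t$-uniform hypergraph Tur\'{a}n problem together with a Behrend-type construction. First I would encode the PHF as a hypergraph $\mathcal H$ on vertex set $V_1 \sqcup \cdots \sqcup V_t$ with $|V_i| = q$: each column becomes the edge whose $i$-th vertex is the alphabet symbol appearing in row $i$. The property ``every $t$ columns are separated in some row'' is then equivalent to $\mathcal H$ containing no $t$ edges whose union has at most $t(t-1)$ vertices; indeed ``no row separates'' means every part has a collision among the $t$ vertices, forcing at most $t-1$ distinct vertices per part.

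For the upper bound $p_t(t, q) = o(q^2)$: when $t = 3$ the forbidden object is precisely a $(6, 3)$-pattern in a 3-partite 3-uniform hypergraph, and the Ruzsa--Szemer\'{e}di theorem yields $o(q^2)$ directly; when $t = 4$ the forbidden object is a $(12, 4)$-pattern in a 4-partite 4-uniform hypergraph, from which $o(q^2)$ follows by a reduction to the $(6,3)$-theorem (via projection onto three parts and a careful counting of how 4-edge configurations project) or alternatively by invoking the 4-uniform hypergraph removal lemma.

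For the lower bound $p_t(t, q) > q^{2 - o(1)}$, I would adapt the Behrend construction. Fix $m$ and let $A \subseteq \{0, 1, \ldots, m-1\}$ have size $m^{1 - o(1)}$ and avoid nontrivial solutions to a finite list of translation-invariant linear equations --- 3-term APs suffice for $t = 3$, while $t = 4$ additionally requires avoidance of equations such as $3x_1 + x_2 = 2x_3 + 2x_4$. Such an $A$ exists by the Frankl--Graham--R\"{o}dl generalization of Behrend's theorem. Set $q = tm$ and, for each $(a, b) \in \{0, \ldots, m-1\} \times A$, take the column
\[
c_{a, b} = \bigl(a,\ m + a + b,\ 2m + a + 2b,\ \ldots,\ (t-1)m + a + (t-1)b\bigr),
\]
so that the $j$-th coordinate lies in its own block of $[q]$. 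This yields $n = m |A| = q^{2 - o(1)}$ distinct columns. To verify the PHF property, suppose $t$ distinct columns $(a_i, b_i)_{i=1}^t$ were not separated by any row; each row $j \in \{0, \ldots, t-1\}$ then supplies a collision $a_i + j b_i = a_{i'} + j b_{i'}$ for some pair $\{i, i'\}$, and column-distinctness forces the $t$ pairs to be distinct --- so they form a simple graph $G$ on $t$ vertices with $t$ edges, hence containing a cycle. A short case analysis on the cycle's length and on the assignment of row-indices to its edges combines the corresponding equations into a single nontrivial translation-invariant linear relation among the $b_i$'s; by the choice of $A$ this forces $b_i = b_{i'}$ and $a_i = a_{i'}$ for two indices, contradicting distinctness.

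The main obstacle is the $t = 4$ lower bound: one must enumerate the possible cycle structures of $G$ (a triangle plus pendant edge, or a $4$-cycle) together with the matchings of rows to edges, extract a translation-invariant linear relation in the $b_i$'s from each, and confirm that the resulting finite family of equations can all be forbidden simultaneously by a single large set $A$. The $t = 4$ upper bound is also nontrivial, since the $(12, 4)$-statement is not as directly available as Ruzsa--Szemer\'{e}di and requires either an ad hoc projection-and-count reduction or the hypergraph removal lemma.
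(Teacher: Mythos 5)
Both halves of your proposal contain a gap, and in both cases the missing ingredient is exactly what Shangguan--Ge (the source the paper cites) supply.

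For the upper bound, your hypergraph reformulation is not an equivalence. ``No row separates'' forces at most $t-1$ vertices in each part, hence at most $t(t-1)$ vertices in total, but the converse is false: $t$ edges can span at most $t(t-1)$ vertices and still be separated if one part receives $t$ distinct values while another part is correspondingly small. For $t=3$ the columns $(1,1,1)^\top$, $(2,1,1)^\top$, $(3,1,2)^\top$ span only six vertices yet are separated by row~$1$. So a PHF hypergraph need not be $(6,3)$-free (respectively $(12,4)$-free), and neither Ruzsa--Szemer\'{e}di nor a removal lemma applies to it as you state. The paper's route really is ``Johnson-type bound \emph{plus} $(6,3)$'': one first shows (in the $t=3$ case) that at most $3q$ columns can belong to a pair agreeing in two rows --- for a fixed pair of rows, distinct agreement classes of size at least two must use pairwise disjoint value sets in the remaining row, so they contribute at most $q$ columns in total --- and deletes them; the surviving hypergraph is linear, and for a \emph{linear} $3$-partite $3$-uniform hypergraph three edges on at most six vertices really do leave at most two vertices per part. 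Only after that reduction does the $(6,3)$-theorem give $o(q^2)$, and the $t=4$ case needs a further (and harder) step of this type, which your ``projection-and-count'' or removal-lemma sketch does not supply.

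For the lower bound, what you have written is the Shangguan--Ge construction with $R=\{0,1,\ldots,t-1\}$. It works for $t=3$, but for $t=4$ it fails outright and no amount of ``case analysis'' can save it. Take the collision $4$-cycle on columns $1,2,3,4$ whose edges $\{1,2\},\{2,3\},\{3,4\},\{4,1\}$ carry rows $0,1,3,2$ respectively. Summing the four collision equations gives
\begin{equation*}
  -2b_1 + b_2 + 2b_3 - b_4 = 0,
\end{equation*}
an invariant equation that \emph{decomposes}: the coefficient subsets $\{-2,2\}$ and $\{1,-1\}$ each sum to zero, so $b_1=b_3$, $b_2=b_4$ is already a nontrivial solution, and there are $\Theta(|A|^2)$ such solutions; no thinning of $A$ can remove them. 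Concretely, for any $b_1\neq b_2\in A$ with $|b_1-b_2|<m/3$ and any $a$ in a suitable middle range of $[0,m)$, the four columns indexed by $(a,b_1)$, $(a,b_2)$, $(a+b_2-b_1,b_1)$, $(a+2b_1-2b_2,b_2)$ are pairwise distinct yet collide in every one of the four rows, so your array is not a $PHF(4;\cdot,q,4)$. Shangguan--Ge avoid this by taking $R=\{0,2,5,\lceil 2^{\sqrt{\log q}}\rceil+5\}$, and the present paper isolates the requirement as $(a,b)$-plasticity: when the $b_i$ are exponentially spread out, Lemma~\ref{lemma_diff_easy} guarantees that no bounded integer combination of them vanishes, so no cycle equation decomposes. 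The right choice of $R$, not the choice of $A$, is what your proposal is missing.
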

The upper bound follows from a combination of  a Johnson-type bound~\cite{shangguan2016separating} and the $(6,3)$-theorem by Ruzsa and Szemer\'{e}di~\cite{ruzsa1978triple}, and can be  generalized as follows.
\begin{theorem}\label{thm_general_upper}
(See \cite{ge2019some}.)
 Let $w_1, \ldots, w_t$ be fixed positive integers such that either $t\ne 2$ or $\min\{w_1, w_2\}\ne 1$. Then the existence of an $SHF(u; n, q, \{w_1, \ldots, w_t\})$ implies that $n=o(q^2)$ for
 sufficiently large $q$.
\end{theorem}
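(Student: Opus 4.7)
My plan is to combine a Johnson-type bound with the Ruzsa--Szemer\'{e}di $(6,3)$-theorem (and its generalizations in higher uniformity), as indicated in the text preceding the statement.

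First, I would translate the problem into extremal hypergraph theory. Given an SHF$(u; n, q, \{w_1, \ldots, w_t\})$ with matrix representation $\mathcal{A}$, associate the $u$-uniform $u$-partite hypergraph $H_{\mathcal{A}}$ on vertex classes $V_1 \sqcup \cdots \sqcup V_u$, each a labeled copy of $[q]$, where every column becomes an edge; thus $H_{\mathcal{A}}$ has $uq$ vertices and $n$ edges. A violation of the SHF condition corresponds to $u$ edges admitting a partition into blocks of sizes $w_1, \ldots, w_t$ with a cross-block vertex coincidence in every class $V_i$. One then bounds the number of vertices spanned by such a forbidden configuration, linking the problem to the Brown--Erd\H{o}s--S\'{o}s hypergraph Tur\'{a}n function $f_u(N, v, u)$.

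Next, I would apply a Johnson-type double-counting to reduce to a shorter SHF. Fix a row $i_0$ and partition the columns by their value in row $i_0$; each resulting part induces an SHF of length $u-1$ over $[q]$ with a related type. Averaging over the rows and values yields an inequality of the form $n \leq q \cdot M$, where $M$ denotes the maximum size of a shorter SHF arising in the reduction. Iterating this step, one eventually reaches a base configuration to which the $(6,3)$-theorem of Ruzsa--Szemer\'{e}di (or a suitable hypergraph removal lemma in the ranges $r=3$ or $e=3$ referenced earlier in the excerpt) applies, yielding $M = o(q)$ and hence $n = o(q^2)$. The crucial point is that this combination converts the weak $O(q^2)$ estimate from the direct Brown--Erd\H{o}s--S\'{o}s upper bound into the desired sharper $o(q^2)$ bound.

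The main obstacle is the combinatorial bookkeeping: different types $\{w_1, \ldots, w_t\}$ lead to different reduction paths, and one must verify that, outside the excluded regime, each branch terminates in a configuration to which the $(6,3)$-theorem (or the appropriate hypergraph Tur\'{a}n result) applies with the tight exponent $2$. The hypothesis ``$t \neq 2$ or $\min\{w_1, w_2\} \neq 1$'' precisely rules out the frameproof-like type $\{1, u-1\}$: in that degenerate case the reduction produces an SHF whose Tur\'{a}n bound is strictly larger than $q^2$, so the $o(q^2)$ conclusion would fail there and a separate construction-based treatment is required.
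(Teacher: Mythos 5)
The paper does not actually prove this theorem; it is quoted directly from Ge, Shangguan and Wang (``Some intriguing upper bounds for separating hash families''), and the surrounding text only names the two ingredients --- a Johnson-type bound and the $(6,3)$-theorem --- without giving the argument. You correctly identified both ingredients, and your observation that the excluded frameproof-like type $\{1,w\}$ must be ruled out because Reed--Solomon codes give $\Theta(q^2)$ there is essentially right (though your phrasing is off: the Tur\'{a}n-style upper bound in that case is still $O(q^2)$, not larger; it is the matching lower-bound construction that makes the $o(q^2)$ conclusion fail).

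However, the way you combine the ingredients contains a genuine gap. Fixing a row and partitioning the $n$ columns by their value there gives, in each of the $q$ cells, an $SHF(u-1; k, q, \{w_1,\ldots,w_t\})$ of the \emph{same} type and length $u-1$. But the maximum size of such an SHF is $\Theta(q)$ --- indeed the paper records $C(u-1,q,\{w_1,\ldots,w_t\})=(u-1)q-o(q)$ --- so your inequality $n \le q\cdot M$ only recovers the trivial $O(q^2)$ bound, and ``iterating'' it only makes the exponent worse. Moreover, the claim that the $(6,3)$-theorem ``yields $M=o(q)$'' is not what that theorem says: it gives $o(v^2)$ edges for a $3$-uniform hypergraph on $v$ vertices whose every $6$ vertices span at most $2$ edges; in the SHF picture the relevant hypergraph has $\Theta(q)$ vertices, so the gain is $o(q^2)$ on the full array, not $o(q)$ on a reduced one. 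The actual role of the Johnson-type bound in the argument of Shangguan--Ge (and its generalization) is not to \emph{shorten} the SHF but to control degeneracies: one shows that the number of columns agreeing with some other column in $u-1$ of the $u$ rows is $O(q)$, discards them, observes that the surviving columns form a linear multipartite hypergraph in which the SHF property forbids the rainbow-cycle / $(6,3)$-type configuration, and only then invokes Ruzsa--Szemer\'{e}di to get $o(q^2)$ for the surviving part; adding back the $O(q)$ discarded columns keeps the total $o(q^2)$. Your sketch, as written, never produces the $o(\cdot)$ gain and would not close.
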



%

In this paper, we establish the lower bound $q^{2-o(1)}$ for all parameters listed in Theorem~\ref{thm_general_upper}, which gives a positive answer to Question~\ref{question_blackburn} for a wider range of type when $N=u$ (actually it is $N\in [u, 2u-3]$, see the conclusion). In particular, this result generalizes Theorem~\ref{thm_old_PHF_bounds} to all $t\geq 3$. Our formal statement is given below.

\begin{theorem}\label{thm_all}
Let $w_1, \ldots, w_t$ be fixed positive integers such that either $t\ne 2$ or $\min\{w_1, w_2\}\ne 1$. Then for sufficiently large $q$ we have $q^{2-o(1)}< C(\sum_{i=1}^t w_i, q, \{w_1, \ldots, w_t\})=o(q^2)$. In particular, $q^{2-o(1)}<p_t(t, q)=o(q^2)$ for all $t\geq 3$.
\end{theorem}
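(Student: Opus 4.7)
The upper bound $o(q^2)$ is already supplied by Theorem~\ref{thm_general_upper}, so the task is to construct, for each sufficiently large $q$, an $SHF(u; n, q, \{w_1, \ldots, w_t\})$ of size $n = q^{2-o(1)}$. Following the Behrend / Ruzsa--Szemer\'edi philosophy flagged in the abstract, my plan is to take the universe to be a Cartesian product $X = A \times S \subset \mathbb{Z}_q \times \mathbb{Z}_q$ with $|A| = \Theta(q)$ and $S \subset \mathbb{Z}_q$ a ``Behrend-type'' set of size $q^{1-o(1)}$ avoiding nontrivial solutions to a finite family of correlated linear equations. The $u$ rows of the matrix representation will be carefully chosen $\mathbb{Z}_q$-linear functions of $(a, s)$, so that the failure of a configuration $C_1, \ldots, C_t$ to be separated by any row, after the consequent equalities among the $a$-coordinates are absorbed into a constant-factor loss in $|A|$, reduces to one of those forbidden equations on the $s$-coordinates.

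I would proceed in three steps. First, fix a prospective blocking tuple $(C_1, \ldots, C_t)$ with $|C_j|=w_j$ and enumerate its collision patterns: in each of the $u$ rows at least two of the $u$ elements must collide, yet collisions inside any single class $C_j$ are forbidden by disjointness, producing a finite combinatorial list of admissible patterns depending only on $(w_1, \ldots, w_t)$. Second, design the linear forms defining the rows so that each admissible pattern collapses, after the $a$-coordinate equalities are fixed, to a single nontrivial equation $\sum_{i=1}^{u} \lambda_i s_i = 0$ with $\sum_i \lambda_i = 0$ (translation invariance). Third, take $S$ to be a standard Behrend set (integers whose base-$B$ digit vectors lie on a common sphere in $\mathbb{Z}^d$), which simultaneously avoids nontrivial solutions to any fixed finite list of translation-invariant linear equations and satisfies $|S| \geq q\exp(-c\sqrt{\log q})$; this gives $n = |A| \cdot |S| = q^{2-o(1)}$.

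The main obstacle will be the second step. One must build the row forms so that the translation ``admissible blocking pattern $\Longleftrightarrow$ nontrivial translation-invariant equation'' is both complete (every blocking pattern yields such an equation) and faithful (no pattern yields a trivial identity that no set $S$ can evade); this is the source of the ``correlated linear equations'' phrase in the abstract, since different patterns share variables and their coefficient vectors must be engineered jointly. The excluded case ``$t=2$ and $\min\{w_1,w_2\}=1$'' appears to correspond to a degenerate pattern that cannot be made to produce a nontrivial equation on the $s$'s, consistent with the fact that the $o(q^2)$ bound of Theorem~\ref{thm_general_upper} also fails there. As a sanity check the design should reduce, in the case $t=3$, $w_i=1$, to rows of the shape $(a, s, a+s)$ with the single translation-invariant equation $s_1 + s_3 = 2s_2$, recovering Theorem~\ref{thm_old_PHF_bounds} via the classical $(6,3)$-theorem of Ruzsa and Szemer\'edi.
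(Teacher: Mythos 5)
Your overall framing is right: the paper does build a Cartesian-product universe $\mathbb{Z}_q\times M$, uses linear row forms, and reduces the blocking analysis to a finite list of translation-invariant linear equations that $M$ must avoid. But the load-bearing claim in your third step --- that a standard Behrend set ``simultaneously avoids nontrivial solutions to any fixed finite list of translation-invariant linear equations'' --- is false, and this is exactly the gap the paper's entire technical apparatus exists to close. The Behrend construction (integers with digit vectors on a sphere) and its generalizations, such as Lemma~\ref{corollary_one_side_along} quoted in the paper, only handle equations whose coefficient vector has a single negative (or single positive) entry, i.e.\ equations of type $(s,1)$ or $(1,s)$, because the argument uses strict convexity: the RHS is a \emph{single} point on the sphere while the LHS is a nontrivial convex combination strictly inside it. The equations $\mathcal{L}(U)$ arising from a non-monotonic permutation $U$ of $R$ have arbitrary sign patterns --- e.g.\ $U=(b_1,b_3,b_2,b_4)$ gives a type-$(2,2)$ equation --- and convexity gives you nothing. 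Moreover the coefficients are as large as $b_t=q^{o(1)}$, not bounded constants. You cannot wave at ``correlated'' and hope a generic Behrend set works.

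The paper's actual route is structurally different and much more careful. It first observes that a $PHF(u;n,q,u)$ is automatically an $SHF(u;n,q,\{w_1,\ldots,w_t\})$ (refinement of the type), so it suffices to build a large PHF; your plan of directly engineering row forms for the general $SHF$ type is unnecessary effort. The PHF columns are $(y+b_1 m,\ldots,y+b_t m)^\top$ as in Lemma~\ref{lemma_pro_6.12}, but the set $R=\{b_1,\ldots,b_t\}$ is chosen with wildly separated scales ($\log b_{i+1} = \Theta(\log^2 b_i)$), which makes the resulting arrays $(a,b)$-feasible. The whole point of Sections~\ref{sec_link_lemma}--\ref{sec_sprine_lemma} --- ancestors, the link lemma, the terminating number $\tau(U)$, Algorithm~\ref{alg_1} --- is an induction that peels off the largest element of the non-monotonic permutation one step at a time, replacing the type-$(s,r)$ equation by an ancestor of a strictly nicer type, and changing base in a controlled way, until a monotonic (type-$(s,1)$) equation is reached, where the Behrend-type input of Lemma~\ref{corollary_one_side_along} finally applies. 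The size loss accumulates along the chain, which is why you get $q^{1-o(1)}$ rather than $q\exp(-c\sqrt{\log q})$. Without some replacement for this inductive machinery, your second and third steps do not go through: for any $t\geq 4$ there exist permutations $U$ whose equation $\mathcal{L}(U)$ no sphere-based construction can evade directly.
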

We remark that the assumption of the type in Theorem~\ref{thm_all} is necessary when $N=u$. For $t=2$ and $\{w_1, w_2\}=\{1, w\}$ for some $w$, an $SHF(N; n, q, \{1, w\})$ is known as a  $w$-frameproof code~\cite{blackburn2003frameproof}. One can use Reed-Solomon codes to show $C(N, q, \{1, w\})=\Omega (q^{\lceil\frac N w\rceil})$  \cite[Construction 2]{blackburn2003frameproof}, and consequently $C(u, q, \{1, w\})=\Omega (q^{2})$.

The lower bound of Theorem~\ref{thm_all} is obtained by explicit constructions of PHFs, which utilize a new type of solution-free-set problem for a set of linear equations defined in~\cite{shangguan2016separating}. Roughly speaking, the problem is to find a set $R=\{b_1,b_2,\ldots,b_t\}$ of $t$ distinct integers in $\{0,1,\ldots,q-1\}$ with small rank $r$\footnote{{The} rank is the difference between the maximum and minimum integers in $R$.}, such that there exists a big set $M\subset\{0,1,\ldots, \lfloor(q-1)/r\rfloor\}$ that contains no nontrivial solutions to any linear equation of the form
\[(u_1-u_k)x_1+(u_2-u_1)x_2+\cdots+(u_k-u_{k-1})x_k=0\] for all $k$-permutation sequences\footnote{ A $k$-permutation sequence means a sequence of $k$ mutually different elements.}
$(u_1,u_2,\ldots,u_k)$ of $R$ and for any $3\le k\leq t$. Such a set $M$ is called an \emph{$R$-solution-free} set.  From $R$ and $M$, one can construct a $PHF(t; q|M|, q, t)$ with columns $(y+b_1m,y+b_2m,\ldots,y+b_tm)^\top$, 
 $y\in \mathbb{{Z}}_q$ and $m\in M$, see \cite[Lemmas 6.1-6.2]{shangguan2016separating}. To get more columns for the PHF, one needs a set $M$ of big size and necessarily a set $R$ with small rank. In this paper, the main technical result regarding to this problem is described as follows.

\begin{theorem}\label{thm_sl} For $t\geq 3$ and sufficiently large $q$, there exists a set $R\subset \{0,1,\ldots,q-1\}$ of size $t$ and rank $r=q^{o(1)}$, which has an $R$-solution-free set in $\{0, 1, \ldots, \lfloor(q-1)\slash r\rfloor\}$ with size $q^{1-o(1)}$.
\end{theorem}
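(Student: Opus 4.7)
My plan is to give an explicit two-ingredient construction: a careful choice of the set $R$, and a Behrend-type ``sphere in digit space'' construction of $M$, with the solution-free property verified by combining the sphere constraint with an algebraic non-degeneracy of the coefficient systems.

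First, I would take $R$ to be a Sidon-like set of size $t$ inside a short interval so that $r=q^{o(1)}$ (in fact $r=O(1)$, since $t$ is fixed). A natural choice is a $B_h$-set for $h=\lfloor t/2\rfloor$, which exists inside an interval of polynomial length in $t$. The key payoff is that for every $k$-permutation $(u_1,\ldots,u_k)$ of $R$ with $3\le k\le t$, the cyclic-difference coefficients $c_i=u_i-u_{i-1}$ (indices mod $k$, so $c_1=u_1-u_k$) have no proper nonempty subset summing to zero: the relation $\sum_{i\in S}c_i=0$ rearranges to $\sum_{i\in S}u_i=\sum_{j\in S-1}u_j$ where $S-1$ denotes the cyclic shift of $S$ by $-1$, and (after replacing $S$ by its complement if $|S|>k/2$) the $B_h$-property forces $S-1=S$, leaving only $S=\emptyset$ or $S=\{1,\ldots,k\}$.

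Second, with $R$ fixed I would construct $M$ via a Behrend-type sphere in digit space. Pick parameters $D,d$ and a base $B\ge 2trD$ large enough so that any integer combination $\sum_i c_i m_i$ with $k\le t$ terms, $|c_i|\le r$, and $m_i\in M$ exhibits no carries across digits. Define
\[M=\Bigl\{\sum_{i=0}^{d-1} x_i B^i : x_i\in\{0,1,\ldots,D-1\},\ \sum_{i=0}^{d-1} x_i^2=s\Bigr\},\]
with $s$ chosen by pigeonhole to maximize $|M|$. A standard count gives $|M|\ge D^{d-1}/d$ for some admissible $s$, and choosing $d=\Theta(\sqrt{\log q})$ with $D$ adjusted accordingly yields $|M|=q^{1-o(1)}$ inside $\{0,\ldots,\lfloor(q-1)/r\rfloor\}$.

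Third, to verify $R$-solution-freeness, suppose $m_1,\ldots,m_k\in M$ satisfy $\sum_i c_i m_i=0$. The no-carry property reduces this to digit-wise equations $\sum_i c_i x_i^{(j)}=0$ in $[0,D)$ for every $j$, which together with the sphere constraints $\sum_j (x_i^{(j)})^2=s$ for every $i$ should force $m_1=\cdots=m_k$. For $k=3$ this is immediate: the equation writes one digit as a convex combination of the other two, and the common-norm condition forces all three to coincide. The main obstacle is $k\ge 4$: even after exploiting the ``no proper zero-sum subset'' property, the digit-wise equation admits nontrivial integer solutions in $[0,D)$ (e.g., ``midpoint-equality'' configurations of two pairs) that the sphere alone cannot exclude. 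The hardest step of the proof is to show that no such configurations can simultaneously be realized across all digit positions while preserving the common norm $s$; I expect this will require either an inductive peeling argument that collapses coincident coordinates to reduce to the three-variable case, or a dedicated case analysis coupling the cyclic-difference structure of the coefficients with the discreteness and non-negativity of the digits.
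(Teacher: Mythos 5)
Your plan has a genuine and concrete gap at the verification step for $k\geq 4$, and you yourself flag it as ``the hardest step'' without resolving it. The Behrend sphere forces triviality only when the linear equation expresses one variable as a convex combination of the others, i.e., for equations of type $(s,1)$ or $(1,s)$. These correspond precisely to \emph{monotonic} permutations $U$. For non-monotonic permutations of length $k\geq 4$, the equation $\mathcal L(U)$ has type $(s,r)$ with $s,r\geq 2$, and the sphere constraint is no longer convexity-breaking: the per-digit solution space to $\sum_i c_i\xi_i=0$ has dimension $k-1\geq 3$, while the norm conditions impose only one constraint per variable, so dimension counting already suggests nontrivial integral solutions can survive. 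The Sidon/$B_h$ property of $R$ does rule out some degenerate coincidences (e.g.\ ``pair-swap'' solutions where $c_i=c_j$ for $i\ne j$), but you give no argument that it rules out all solutions, and it is far from clear that it does. Without that argument the proposal proves the $k=3$ case (and, more generally, the monotonic case) but not the theorem.

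It is also worth noting that the paper's proof does not use a Sidon set of constant rank at all; it uses a ``plastic'' set $R$ in which $\log b_{i+1}=\Theta(\log^2 b_i)$, so the elements live on wildly separated scales and the rank is $q^{o(1)}$ but not $O(1)$. This multi-scale structure is what makes their inductive ``link lemma'' work: one repeatedly deletes the largest element of $U$, replaces the two largest coefficients by one coefficient plus a much smaller one $\theta$, and shows that solution-freeness propagates backward along this chain via a base-$(\alpha\pm\theta)$ digit expansion. The base case of that induction is exactly the monotonic case handled by the Ge--Shangguan lemma for one-sided equations. Your $O(1)$-diameter Sidon set cannot support such a peeling argument because all elements of $R$, and hence all coefficients and their differences, live on a single scale; so the alternative you gesture toward (``an inductive peeling argument that collapses coincident coordinates to reduce to the three-variable case'') would need a genuinely new mechanism. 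As written, the proposal does not establish the theorem.
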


By Theorem~\ref{thm_sl} and the above construction, one can easily obtain $p_t(t, q)>q^{2-o(1)}$, which is the main part of Theorem~\ref{thm_all}. Previously, the result in Theorem~\ref{thm_sl} was known only for $t=3,4$ in \cite{shangguan2016separating}, which yields the lower bound in Theorem~\ref{thm_old_PHF_bounds}.


The paper is organized as follows. In Section~\ref{sec_preliminaries} we recall the relationship between the separating hash families and a solution-free-set problem, and then turn the proof of Theorem~\ref{thm_sl} to the existence problem of a large solution-free set of a certain kind of equations (Theorem~\ref{thm_trans_main_thm}). Some useful notations and an outline for the proof of Theorem~\ref{thm_trans_main_thm} are given in Section~\ref{sec_notations}. Sections~\ref{sec_link_lemma} and~\ref{sec_sprine_lemma} are devoted to prove Theorem~\ref{thm_trans_main_thm} by following the outline. A conclusion and some open problems are listed in Section~\ref{sec_conclusion}.


\section{Preliminaries and Main Results}\label{sec_preliminaries}
 For convenience, we use $[a, b]$ to denote the set of integers $\{a,a+1,\ldots,b\}$ for two integers $a\leq b$, and use $[n]$ to denote $[1, n]$ for short.  The logarithms are always under base $2$ by default.



We first define solution-free sets of  equations formally. Given a linear equation $\mathcal L$ with nonzero integer coefficients $(a_i)_{1\le i\le k}$ and an integer $b$
\begin{equation*}
a_1x_1+ \cdots+ a_kx_k= b,
\end{equation*}
an integer set $M$ is called {\it solution-free} of $\mathcal L$, or \emph{$\mathcal L$-free} for short, if there does not exist any nontrivial solution $(x_1, \ldots, x_k)$ with values chosen from $M$. Here a solution $(x_1, \ldots, x_k)$ is said to be trivial if all $x_i$'s are equal. Given an integer $m$, the question of determining  the largest size of an $\mathcal L$-free
subset $M\subset [m]$
is a fundamental question in additive combinatorics whose history dates back many decades, see e.g. \cite{schur1916kongruenz,erdos1936some,komlos1975linear}.

If $\sum_{i=1}^k a_i=b=0$, the equation $\mathcal L$ is called  {\it invariant}  \cite{ruzsa1993solving}. Invariant equations have drawn a lot of attention so far, since the maximum $\mathcal L$-free subset of $[m]$
must have size $o(m)$ \cite{szemeredi1975sets}. Many fundamental topics in combinatorial number theory can be restated as solution-free problems with different invariant equations, such as  Sidon sets~\cite{ALON1985201,Erdos1986, Furedi2013, o2004complete,erdos1941problem} ($x+y=u+v$) and Roth theorem~\cite{roth1953certain} ($x+y=2z$). Note that the definition of ``trivial'' solutions may be different for different problems.

Besides of considering one linear equation, considering the solution-free set problem of a linear system is also of interest. For example,
a $k$-term arithmetic progression can be described by a system of $k-2$ linear equations with $k$ variables \cite{szemeredi1975sets}.

In this paper, we consider a  \emph{common} solution-free subset $M\subset [m]$ for a collection of related invariant equations. Here, ``common'' means the subset $M$ is solution-free for each equation in the system.  The set of equations is defined by a set $R$ of integers, so we call $M$  an  $R$-solution-free set. Details of the definition are given below.

Given a sequence $U=(u_1, u_2, \ldots, u_k)$ composed by $k$ mutually different integers, we can naturally define an invariant equation $\mathcal L(U)$ with $k$ variables of the form
\begin{equation}\label{eqlu}
  (u_1- u_k)x_1+ (u_2-u_1)x_2+ \ldots+ (u_k- u_{k-1})x_k=0.
\end{equation}
Let $R=\{b_1, b_2, \ldots, b_t\}$ with $0\le b_1<b _2<\cdots <b_t$ be a set of $t$ integers.
A set $M\subset [m]$ is said to be $R$-{\it solution-free} if for any $k\in [3, t]$ and any {$k$-permutation sequence} $U$ of $R$, $M$ is $\mathcal L(U)$-free. Define the rank of $R$ to be $b_t-b_1$, denoted by $r(R)$.

In  \cite[Lemmas 6.1-6.2]{shangguan2016separating}, the authors showed that if $R$ is a $t$-subset in $[0,q-1]$ and $M$ is an $R$-solution-free set in $[0, \lfloor(q-1)/r(R)\rfloor]$, then the $t\times q|M|$ array $\mathcal A$ with columns $(y+b_1m,y+b_2m,\ldots,y+b_tm)^\top$, $y\in \mathbb{{Z}}_q$ and $m\in M$, is a $PHF(t; q|M|, q, t)$. All calculations here are modulo $q$. The proof is done by contradiction in two steps.  If $\mathcal A$ is not a PHF, one can deduce that for some $k\in[t]$, there exist distinct pairs  $(y_1,m_1),\ldots,(y_k,m_k)\in \mathbb{{Z}}_q\times M$ (i.e., $k$ different columns) and distinct indices $j_1,j_2,\ldots,j_k\in [t]$ (i.e., $k$ different rows), such that $y_i+b_{j_{i+1}}m_i=y_{i+1}+b_{j_{i+1}}m_{i+1}$, $i\in [k-1]$ and $y_k+b_{j_1}m_k=y_{1}+b_{j_1}m_{1}$. This is the so called rainbow $k$-cycle defined in  \cite{shangguan2016separating}.  However, these $k$ equations imply that \[(b_{j_2}-b_{j_1})m_1+(b_{j_3}-b_{j_2})m_2+\cdots+(b_{j_k}-b_{j_{k-1}})m_{k-1}+(b_{j_1}-b_{j_k})m_k=0,\] which contradicts that $M$ is an $R$-solution-free set.   These results can be summarized and restated as follows for our purpose.
\begin{lemma}\label{lemma_pro_6.12}
(See \cite{shangguan2016separating}, Lemmas 6.1-6.2.)
Let $R\subset [0, q-1]$ be a set of $t$ elements. If $M\subset [0, \lfloor (q-1)\slash r(R)\rfloor]$ is an $R$-solution-free set, then $p_t(t, q)\ge q|M|$.
\end{lemma}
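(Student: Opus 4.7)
My plan is to explicitly build a $PHF(t;q|M|,q,t)$ from $R$ and $M$, following the construction already anticipated by the paragraph preceding the statement. Write $R=\{b_1,\ldots,b_t\}$ with $b_1<\cdots<b_t$, and form the $t\times q|M|$ array $\mathcal A$ whose column indexed by $(y,m)\in\mathbb{Z}_q\times M$ equals $(y+b_1m,\ldots,y+b_tm)^{\top}\pmod q$. I would first verify that the $q|M|$ columns are pairwise distinct: if two columns $(y,m)$ and $(y',m')$ agreed, comparing the first two rows yields $(b_2-b_1)(m-m')\equiv 0\pmod q$, and the bounds $0<b_2-b_1\le r(R)$ and $|m-m'|\le\lfloor(q-1)/r(R)\rfloor$ force $(b_2-b_1)(m-m')=0$ in $\mathbb{Z}$, so $m=m'$ and then $y=y'$. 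Hence $\mathcal A$ describes a hash family of universe size $q|M|$, and it only remains to check that it is $t$-perfect.

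The bulk of the argument is by contradiction. Assume some $t$ distinct columns of $\mathcal A$ admit no separating row; then in every row $i\in[t]$ at least two of the chosen columns collide. Picking one colliding pair per row gives a labeled multigraph $G$ on the $t$ chosen columns with $t$ edges, each carrying a distinct row label in $[t]$. Having as many edges as vertices, $G$ must contain a cycle, and its edge labels are automatically distinct; a length-$2$ cycle is ruled out by a repetition of the $(b_i-b_j)(m-m')\equiv 0$ argument used above. So one obtains some $k\in[3,t]$, distinct columns $(y_1,m_1),\ldots,(y_k,m_k)$, and distinct row indices $j_1,\ldots,j_k\in[t]$ satisfying exactly the rainbow $k$-cycle equations already displayed in the excerpt.

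Summing these $k$ congruences causes the $y_i$'s to telescope, leaving
\[
(b_{j_2}-b_{j_1})m_1+(b_{j_3}-b_{j_2})m_2+\cdots+(b_{j_k}-b_{j_{k-1}})m_{k-1}+(b_{j_1}-b_{j_k})m_k\equiv 0\pmod q,
\]
which, up to a cyclic relabeling of variables, is exactly the invariant equation $\mathcal L(U)$ evaluated at $(m_1,\ldots,m_k)$ for the $k$-permutation $U=(b_{j_2},b_{j_3},\ldots,b_{j_k},b_{j_1})$ of $R$. The size bound on $M$ is precisely calibrated so that the integer value of the left-hand side has absolute value strictly smaller than $q$, lifting the congruence to the honest integer identity $\mathcal L(U)(m_1,\ldots,m_k)=0$. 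Moreover this solution is nontrivial: if all $m_i$ were equal, the collision equations would force all $y_i$ to coincide as well and contradict the distinctness of the $k$ chosen columns. Since $3\le k\le t$, this contradicts that $M$ is $R$-solution-free, completing the proof. The two places where care is needed, and where the hypothesis $M\subset[0,\lfloor(q-1)/r(R)\rfloor]$ is actually used, are the column-distinctness check and the mod-$q$-to-$\mathbb{Z}$ lift; both ultimately reduce to the same size estimate $r(R)\cdot m<q$ for every $m\in M$, combined with the invariance of $\mathcal L(U)$.
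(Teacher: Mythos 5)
Your construction, the column-distinctness check, the multigraph argument producing a simple cycle of length $k\in[3,t]$, and the telescoping step all faithfully reproduce the sketch that precedes the statement; note, though, that the paper itself offers no proof here, only a citation to \cite[Lemmas 6.1--6.2]{shangguan2016separating}. The one step at which you add substance beyond the sketch is the lift of the telescoped congruence from $\mathbb{Z}_q$ to $\mathbb{Z}$, and that is exactly where the proposal has a gap. You assert that the left-hand side $\sum_{i}(b_{j_{i+1}}-b_{j_i})m_i$ has absolute value strictly less than $q$ ``because $r(R)\cdot m<q$,'' but for a \emph{non-monotonic} cyclic sequence $(b_{j_1},\ldots,b_{j_k})$ the sum of the positive coefficients of $\mathcal L(U)$ (the total ascent around the cycle) strictly exceeds $r(R)$, and the bound fails. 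Concretely: take $R=\{0,2,5,14\}$, so $r(R)=14$; take $q=100$, $L=\lfloor 99/14\rfloor=7$; take the rainbow $4$-cycle with $(b_{j_1},b_{j_2},b_{j_3},b_{j_4})=(0,5,2,14)$, whose coefficient vector is $(5,-3,12,-14)$; and take $m=(6,0,7,1)\in[0,L]^4$. Then the left-hand side equals $30-0+84-14=100=q$, so the congruence holds while the integer identity does not.

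Your estimate \emph{is} correct whenever the cyclic sequence is monotonic (in particular for every $k=3$), since then the total ascent equals $\max U-\min U\le r(R)$ and $r(R)\cdot L\le q-1$; but the non-monotonic case, which is precisely the case this paper is devoted to and which enters as soon as $t\ge 4$, is not covered. Ruling out such ``wrap-around'' rainbow cycles requires a genuine additional argument (this is the actual content of the cited Lemmas 6.1--6.2 of Shangguan--Ge), for instance extracting from the wrap-around a different permutation sequence $U'$ of $R$ and an honest integer solution of $\mathcal L(U')$ in $M$. As written, the proposal proves only the monotonic case, and the mod-$q$-to-$\mathbb{Z}$ step needs to be replaced rather than justified by the crude magnitude bound.
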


%

 By Lemma~\ref{lemma_pro_6.12}, we are interested in a $t$-set $R\subset [0,q-1]$ which has a big $R$-solution-free set $M\subset [0,\lfloor(q-1)/r(R)\rfloor]$. For $t=3,4$, the authors chose $R=\{0,1,2\}$ and $R=\{0,2,5,\lceil2^{\sqrt{\log q}}\rceil+5\}$ respectively, and showed that there exists an $R$-solution-free set in $[0, \lfloor (q-1)\slash r(R)\rfloor]$ of size at least $q^{1-o(1)}$ for sufficiently large $q$. Consequently, they were able to prove Theorem~\ref{thm_old_PHF_bounds}. In this paper, we prove the existence of a set $R$ with the same property for all $t\geq 3$, that is Theorem~\ref{thm_sl}. In fact, the set $R$ is constructed explicitly as stated in the following theorem.

\begin{theorem}\label{theorem_main_solution_free}
For a fixed integer $t$ and any reals $a,b$ satisfying $0<b\le a^t<1$, when $m$ is large enough, let $R=\{b_1, \ldots, b_t\}$ be a subset of $[m]$ satisfying
\begin{itemize}
\item[(1)] $\log{b_1}=\Omega(\log^b{m}),$ $\log{b_t}=O(\log^a m);$ and
\item[(2)] for any $i\in [t-1],$ $\log{b_i}=O(\log^a{b_{i+1}})$.
\end{itemize}
Then there exists a constant $c=c(t, a)\in (0, 1)$ such that an $R$-solution-free set $M\subset [m]$ exists with $|M|=\frac m {2^{O(\log^c m)}}$.
\end{theorem}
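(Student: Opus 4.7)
The plan is a multi-scale variant of the classical Behrend sphere construction. Fix a base $B$ much larger than $b_t$ and let $n \approx \log_B m$, so integers in $[m]$ admit $n$-digit base-$B$ expansions. The candidate set $M$ will consist of those $x = \sum_{j=0}^{n-1} x_j B^j$ whose digits $x_j$ lie in a narrow range $[0, D)$ with $D \ll B$, and whose digit vector $(x_0, \ldots, x_{n-1})$ satisfies an appropriately chosen sphere-type condition. The rapid growth hypothesis, which forces $\log b_1 = O(\log^{a^t} m)$ by iterating $\log b_i = O(\log^a b_{i+1})$, lets me split the $n$ digit positions into $t$ consecutive blocks of geometrically decreasing lengths --- one block per scale $b_i$ --- and impose a separate sphere condition on each block.

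The first key step will be a no-carry argument (a ``Link Lemma''): for any $k\in[3,t]$, any $k$-permutation $U=(u_1, \ldots, u_k)$ of $R$, and any solution $x^{(1)}, \ldots, x^{(k)} \in M$ to $\mathcal{L}(U)$, since the coefficients $u_{i+1} - u_i$ are bounded in absolute value by $b_t$ and each digit by $D$, choosing $B \gg t\,b_t\,D$ prevents any carrying across digit positions. Consequently the digit vectors must satisfy $\mathcal{L}(U)$ componentwise: for each $j$, the tuple $(x^{(1)}_j, \ldots, x^{(k)}_j)$ is itself a solution with entries in $[0, D)$.

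The second key step will be a convexity argument (a ``Spring Lemma''): because $\mathcal{L}(U)$ is invariant and the $x^{(i)}$ share the same Euclidean norm block-by-block, the usual strict triangle-inequality reasoning for spheres forces the componentwise solutions to be trivial. I would apply this blockwise, using the scale separation so that each equation $\mathcal{L}(U)$ is ``pinned'' to the block whose characteristic scale matches the dominant coefficient $u_{i+1} - u_i$. A standard pigeonhole on the sphere radii across all blocks then yields $|M| \ge m/2^{O(\log^c m)}$ for a suitable $c = c(t, a) \in (0,1)$, where $c$ emerges from the nested sphere dimensions; the condition $b \le a^t$ is precisely what allows $b_1$ to fit inside the smallest block.

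The main obstacle will be simultaneously controlling the entire family of equations $\{\mathcal{L}(U)\}$, which grows exponentially in $t$ and contains coefficients of very different orders of magnitude. Unlike Behrend's original single-equation setting, a sphere in one ambient dimension will not suffice here, and one must exploit the $t$ scales built into $R$. Making the scale decoupling rigorous --- showing that the Link and Spring Lemmas apply cleanly on each block, that no two distinct equations force the same block into contradictory constraints, and that the compounded parameter losses still fit the target bound $m/2^{O(\log^c m)}$ --- is the technically delicate part, which I expect to be the substance of Sections~\ref{sec_link_lemma} and~\ref{sec_sprine_lemma}.
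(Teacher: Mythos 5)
Your proposal takes a genuinely different route from the paper and, as written, it has a gap that I don't think can be closed without importing the paper's main idea.

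The paper does \emph{not} attempt a single global construction for the whole family of equations. Instead it first invokes Lemma~\ref{lemma_sum_free_intersection} to reduce to finding, for \emph{each} permutation sequence $U$ separately, a large $\mathcal L(U)$-free set. The heart of the argument is then the link lemma (Lemma~\ref{lemma_linkage}): starting from a non-monotonic array $\mathcal A(U)$, one expands integers in base $\alpha\pm\theta$ (where $\alpha$ is a largest coefficient and $\theta$ a tiny auxiliary), observes that the digit vectors of a would-be solution must solve a modified invariant equation --- the ``ancestor'' $\mathcal A_1$ or $\mathcal A_2$ in which the two largest coefficients have been replaced by much smaller ones --- and thereby transfers a solution-free set for the ancestor into one for $\mathcal A(U)$. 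Iterating this $\tau(U)$ times drives the array down to a monotonic one, for which the known result Lemma~\ref{corollary_one_side_along} applies. Behrend-type constructions only ever enter implicitly through that cited monotonic case; the new content is the ancestor transformation.

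The gap in your approach is the ``Spring Lemma'' step for non-monotonic equations. The Behrend sphere/convexity argument proves that there are no nontrivial solutions to $a_1x_1+\cdots+a_sx_s=(a_1+\cdots+a_s)x_{s+1}$ precisely because the digit vector of $x_{s+1}$ is a \emph{convex combination} of the others, and strict convexity of the Euclidean ball forbids a boundary point from being a nontrivial convex combination of other boundary points. For an invariant equation of type $(s,r)$ with $s,r\ge 2$ there is no convex-combination structure at all, and indeed Behrend-type sets do contain nontrivial solutions to equations such as $x_1+x_2=y_1+y_2$. Your block decomposition and no-carry lemma correctly reduce a global solution to a digit-block solution, but they leave the block-level equation with the same type $(s,r)$ --- so the convexity step simply does not apply. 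The suggested fix of ``pinning'' each equation to a single block by dominant coefficient also fails for a structural reason: in any $\mathcal L(U)$ the \emph{two} largest coefficients are $\max U-u_{k}$ and $\max U-u_{2}$, of the same order $\Theta(\max U)$ but unequal and of opposite sign, so after discarding carries the dominant block still sees a genuinely two-sided equation with mismatched coefficients, not a one-sided one. Salvaging this would require exactly the kind of coefficient-cancellation that the paper encodes in the ancestor operation; without some analogue of that step, the multi-scale Behrend construction cannot handle the non-monotonic $\mathcal L(U)$, which is the entire difficulty the theorem is about.
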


In Theorem~\ref{theorem_main_solution_free}, the parameter $m$ is supposed to satisfy $m=q^{1-o(1)}$. We use $m$ instead of $q$ since the set $M$ is required  to be contained in $[0,\lfloor(q-1)/r(R)\rfloor]$. So if $m<(q-1)/r(R)$, $M$ satisfies the required range condition. To show that Theorem~\ref{theorem_main_solution_free} indeed implies  Theorem~\ref{thm_sl}, we need the following useful lemma.

\begin{lemma}\label{lemma_little_order}
(See \cite{Ge2021SparseHN}.)
For any fixed $a\in (0, 1)$, $2^{O(log^a m)}= m^{o(1)}.$
\end{lemma}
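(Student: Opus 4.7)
The plan is to unpack both asymptotic notations and then reduce the claim to the elementary fact that $\log^a m = o(\log m)$ whenever $a < 1$.

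First, I would recall what the two sides actually mean. By definition, $2^{O(\log^a m)}$ stands for a quantity that, for some fixed constant $C = C(a) > 0$ and all sufficiently large $m$, is bounded above by $2^{C\log^a m}$. On the other hand, writing a quantity as $m^{o(1)}$ means that for \emph{every} $\epsilon > 0$, that quantity is bounded above by $m^\epsilon$ once $m$ is large enough. So, with $C$ fixed by the big-O hypothesis, the lemma reduces to showing that for each $\epsilon > 0$ there is a threshold $m_0 = m_0(C,\epsilon)$ such that
\[
2^{C\log^a m} \;\le\; m^\epsilon \qquad \text{for all } m \ge m_0.
\]

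Next, I would rewrite $m^\epsilon = 2^{\epsilon \log m}$ and compare exponents. The inequality above becomes
\[
C\log^a m \;\le\; \epsilon \log m,
\]
and, since $\log m > 0$ for $m$ large, this is equivalent to $C\log^{a-1} m \le \epsilon$. Since $a - 1 < 0$, we have $\log^{a-1} m \to 0$ as $m \to \infty$, so the left-hand side is eventually dominated by $\epsilon$. Choosing $m_0$ to be any integer exceeding $2^{(C/\epsilon)^{1/(1-a)}}$ makes this precise, and the bound follows.

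There is essentially no obstacle here; the only subtlety is the order of quantifiers. The constant $C$ is determined once by the big-O hypothesis, while $\epsilon$ ranges over all positive reals afterwards, and the threshold $m_0$ is allowed to depend on the pair $(C,\epsilon)$. The hypothesis $a < 1$ is used exactly once, to ensure $\log^{a-1} m \to 0$; if $a = 1$ the conclusion would fail, as expected.
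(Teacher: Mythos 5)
Your proof is correct. The paper simply cites this lemma from the reference \cite{Ge2021SparseHN} without reproducing a proof, and your argument is the natural one: unpack the definitions of $O(\cdot)$ and $o(\cdot)$, pass to exponents, and reduce to the fact that $\log^{a-1} m \to 0$ when $a<1$. The order-of-quantifiers remark is exactly the point one must be careful about, and the explicit threshold $m_0 > 2^{(C/\epsilon)^{1/(1-a)}}$ confirms the bound. Nothing is missing.
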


From Lemma~\ref{lemma_little_order}, the conditions of elements of $R$ in  Theorem~\ref{theorem_main_solution_free} can be roughly explained as follows. The maximum element $b_t=m^{o(1)}$, that is, the rank of $R$ is very small comparing to $m$ or $q$.  For general $i\in [t-1],$ $b_i=b_{i+1}^{o(1)}$, that is $b_i$ is much smaller than $b_{i+1}$. Finally, although $b_1$ is the smallest number, $b_1$ is not too small by $\log{b_1}=\Omega(\log^b{m})$. These conditions are useful for us to prove the existence of a large $R$-solution-free set later.

\begin{proof}[Proof of Theorem~\ref{thm_sl} using Theorem~\ref{theorem_main_solution_free}] Let $m=\lfloor{(q-1)}\slash{2^{\sqrt{\log q}}}\rfloor-1$. Then $m=q^{1-o(1)}<q$ by  Lemma~\ref{lemma_little_order}. Take $R=\{b_1, b_2, \dots, b_t\}$ with $b_t=\lfloor2^{\sqrt{\log{m}}}\rfloor$, $b_{t-1}=\lfloor2^{\sqrt{\log{b_t}}}\rfloor$,  $\ldots,$ $b_2=\lfloor2^{\sqrt{\log{b_3}}}\rfloor$ and $b_1=\lfloor2^{\sqrt{\log{b_2}}}\rfloor$. Then $\log {b_{i+1}}=\Theta(\log^2 b_i)$ for $i\in[t-1]$ and $\log b_t= \Theta(\log^{1\slash 2} m)$. For $i=t-1,\ldots,1$, by substituting the $i$th equation for $b_t$ in the $t$th equation recursively $t$ times, we get  $\log b_1=\Theta(\log^{1\slash 2^t} m)$. Since $t$ is a fixed constant, $1\slash 2^t$ is a constant. By applying Theorem~\ref{theorem_main_solution_free} with $a=\frac 1 2$ and $b=\frac 1 {2^t}$, we get the existence of an $R$-solution-free set $M\subset [m]$ with $|M|=\frac m {2^{O(\log^c m)}}$, where $c$ only depends on $t$. By Lemma~\ref{lemma_little_order}, $|M|=m^{1-o(1)}= q^{1-o(1)}$. Finally, the rank of $R$, $r(R)\leq b_t=\lfloor 2^{\sqrt{\log m}}\rfloor< 2^{\sqrt{\log{q}}}$, so $m<\lfloor(q-1)\slash r(R) \rfloor$.
%
%
\end{proof}

Now we give the proof of Theorem~\ref{thm_all}.

\begin{proof}[Proof of Theorem~\ref{thm_all}]
The upper bound is simply from Theorem~\ref{thm_general_upper}. As for the lower bound, we first have $p_t(t, q)\ge q^{2-o(1)}$ for all $t\geq 3$ by Theorem~\ref{thm_sl} and Lemma~\ref{lemma_pro_6.12}. That is, we obtain a $PHF(t; n, q, t)$ with $n=q^{2-o(1)}$. For general  $SHF(u; n, q, \{w_1, \ldots, w_t\})$ with $u\ge 3$, we consider the refinement of its type, $\{w_1'=1, w_2'=1, \ldots, w_u'=1\}$. Since a $PHF(u; n, q, u)$ is an $SHF(u; n, q, \{w_1, \ldots, w_t\})$ by definition,
we have
$$C(u, q, \{w_1, \ldots, w_t\})\ge C(u, q, \{w_1', \ldots, w_u'\})= p_u(u, q)\ge q^{2-o(1)}.$$
\end{proof}

To prove Theorem~\ref{theorem_main_solution_free}, we need to find a common solution-free set of size $q^{1-o(1)}$ for the family of invariant equations as Eq. (\ref{eqlu}) defined by $R$. Since the size $t$ of $R$ is a fixed finite number, the number of invariant equations considered is finite.  By the following lemma, it is  enough to show that each invariant equation in this family has a solution-free set with size at least $q^{1-o(1)}$.


\begin{lemma}\label{lemma_sum_free_intersection}
(See \cite{Ge2021SparseHN}.)
Let $0<c<1$ be a fixed constant and $l$ be a fixed positive integer. Let $\sum_{i=1}^s a_{ij}x_i=0$, $1\le j\le l$ be $l$ invariant equations (some coefficient $a_{ij}$ may be zero) with unknowns $x_i$.
Suppose that for $1\le j \le l$, there exists $M_j\subset [m]$ with $|M_j|\ge \frac m {2^{O(\log^c{m})}}$ which is solution-free of the $j$th equation $\sum_{i=1}^s a_{ij}x_i=0$. Then, there exists $M\subset [m]$ with $|M|\ge \frac m {2^{O(\log^c{m})}}$ which is solution-free of any of the $l$ equations $\sum_{i=1}^s a_{ij}x_i=0$, $1\le j\le l$.
\end{lemma}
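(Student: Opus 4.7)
The approach is to exploit translation invariance. Since each equation $\mathcal{L}_j : \sum_{i=1}^{s} a_{ij} x_i = 0$ is invariant ($\sum_i a_{ij} = 0$), the substitution $x_i \mapsto x_i + t$ preserves both the equation and the nontriviality condition ``not all $x_i$ equal'', so if $M$ is $\mathcal{L}_j$-free then so is every translate $M + t$. This freedom lets me slide each hypothesized $M_j$ independently and then intersect.

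Concretely, for translates $t_2, \ldots, t_l \in [-m, m]$ to be chosen, I would define
\[
M := M_1 \cap (M_2 + t_2) \cap \cdots \cap (M_l + t_l) \subset [m].
\]
Regardless of the choice of translates, $M$ is simultaneously $\mathcal{L}_j$-free for every $j$: the inclusion $M \subset M_1$ handles $j = 1$, and for each $j \ge 2$ the set $M - t_j \subset M_j$ is $\mathcal{L}_j$-free, whence $M = (M - t_j) + t_j$ is $\mathcal{L}_j$-free by translation invariance.

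The remaining task is to select translates so that $|M| \ge m / 2^{O(\log^c m)}$, which I would accomplish by a double count. Consider the tuples $(x, t_2, \ldots, t_l) \in M_1 \times [-m, m]^{l-1}$ with $x - t_j \in M_j$ for all $j \ge 2$. For any fixed $x \in M_1 \subset [m]$ and each $j \ge 2$, the valid values of $t_j$ are precisely $t_j = x - y$ for $y \in M_j$, giving exactly $|M_j|$ choices, all lying in $[-m, m]$. Hence the total count equals $\prod_{j=1}^{l} |M_j|$. Averaging over the $(2m+1)^{l-1}$ possible tuples $(t_2, \ldots, t_l)$, some choice yields
\[
|M| \ge \frac{\prod_{j=1}^{l} |M_j|}{(2m+1)^{l-1}} \ge \frac{\bigl(m/2^{O(\log^c m)}\bigr)^l}{(2m+1)^{l-1}} = \frac{m}{2^{O(l \log^c m)}} = \frac{m}{2^{O(\log^c m)}},
\]
where the last equality uses that $l$ is a fixed constant.

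The main challenge is conceptual rather than technical: recognizing that the translation invariance of invariant equations is exactly the compatibility property needed to fuse independently-chosen solution-free sets via a shift-and-intersect argument, after which a one-line averaging delivers the quantitative bound. A minor subtlety to verify is that the allowed range $[-m, m]$ for each $t_j$ is large enough to accommodate every $t_j = x - y$ arising for $x \in M_1, y \in M_j \subset [m]$; this is immediate since $|x - y| \le m$.
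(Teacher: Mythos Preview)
Your proof is correct. The paper does not supply its own proof of this lemma but merely cites it from \cite{Ge2021SparseHN}; the translation-and-averaging argument you give is the standard one for this type of statement, and all details (translation invariance of both the equation and the nontriviality condition, the double count, and the range check for the shifts) are handled properly.
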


By Lemma~\ref{lemma_sum_free_intersection}, it suffices to show the following key result. 
\begin{theorem}\label{thm_trans_main_thm}
For a fixed integer $t$ and any reals $a,b$ satisfying $0<b\le a^t<1$, when $m$ is large enough,  let $R=\{b_1, \ldots, b_t\}$ be a subset of $[m]$ satisfying
\begin{itemize}
\item[(1)] $\log{b_1}=\Omega(\log^b{m}),$ $\log{b_t}=O(\log^a m);$ and
\item[(2)] for any $i\in [t-1],$ $\log{b_i}=O(\log^a{b_{i+1}})$.
\end{itemize}
Then for any $k$-permutation sequence $U$ of $R$ with {$k\in[3, t]$}, there exists a constant $c=c(\tau, a)\in (0, 1)$ such that an $\mathcal L(U)$-free set $M\subset [m]$ exists with $|M|=\frac m {2^{O(\log^{c} m)}}$. Here $\tau$, which will be defined later, is some number in $[0,t-3]$ depending on $U$.
\end{theorem}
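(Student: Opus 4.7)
My plan is a Behrend-style sphere construction tuned to the multi-scale nature of $R$. Given a $k$-permutation $U$ of $R$ and the corresponding invariant equation $\mathcal{L}(U)$ with coefficients $a_j = u_{j+1}-u_j$ (cyclically), I would write each integer in $[m]$ in a mixed-radix form $x = \sum_{i=0}^{d-1} x^{(i)} N^i$ with $0\le x^{(i)} \le L$, choosing a base $N$ with $N > t\,b_t$, a length $d$ with $N^d \approx m$, and a digit bound $L$ small enough that $\sum_j|a_j|\,L < N$. The last inequality blocks digit-carries: any solution of $\mathcal{L}(U)=0$ with every variable in the image of $\{0,\dots,L\}^d$ forces the digit-wise equations $\sum_j a_j x_j^{(i)} = 0$ for each $i$. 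The task thus reduces to finding a large solution-free subset of the grid $\{0,\dots,L\}^d$ for $\mathcal{L}(U)$ in the integer lattice, and the natural candidate is the set of integer points on a single Euclidean sphere.

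A key subtlety is that, unlike the three-term case, a generic invariant equation is not automatically rendered solution-free by a sphere: $\mathcal{L}(U)=0$ says only that two convex combinations of the variables agree, which need not force all of them to coincide. This is where I expect the paper's separation into a \emph{spine lemma} (Section~\ref{sec_sprine_lemma}) and a \emph{link lemma} (Section~\ref{sec_link_lemma}) to enter. The spine should supply the core Behrend-type set attacking the finest-scale component of $\mathcal{L}(U)$, where only a few small coefficients are active and strict convexity on the sphere rules out all non-diagonal solutions. The link lemma would then attach higher-scale contributions in stages, the $i$-th stage enlarging the radix in accordance with $b_{i+1}$; the hypothesis $\log b_i = O(\log^a b_{i+1})$ of condition~(2) should ensure that the loss incurred at stage $i$ is only $2^{O(\log^a b_{i+1})}$, which telescopes under repeated application to a quantity of the form $2^{O(\log^{a^{i+1}} m)}$.

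The integer $\tau\in[0,t-3]$ presumably records the number of nontrivial scale-transitions induced by $U$, and the final exponent $c=c(\tau,a)$ arises by summing the per-stage telescoped losses across the $\tau+1$ active scales, with the hypothesis $b \le a^t$ used to absorb the contribution of the smallest level $b_1$ (whose lower bound in condition~(1) caps how much is lost at the deepest layer). The main obstacle I anticipate is certifying solution-freeness throughout this layered construction: at deeper levels the effective digit-wise equations carry coefficients that are differences of several $b_i$'s, and one must show that the spherical selection from the spine, together with the successive links, simultaneously rules out non-diagonal solutions for \emph{every} such auxiliary equation. I would carry out the induction on $\tau$, with the spine lemma handling the base case $\tau=0$ and the link lemma supplying the inductive step while preserving both the size bound $m/2^{O(\log^c m)}$ and the solution-freeness against the newly introduced scale.
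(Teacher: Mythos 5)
Your high-level plan is genuinely close to the paper's: both proceed by a digit-expansion composition of solution-free sets, an induction governed by $\tau$, a ``link'' step passing from one level to the next, and a Behrend-type argument for the base case. But two of your central ingredients do not survive inspection, and they are exactly where the paper's real work lies.

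First, the single-radix no-carry scheme you describe cannot deliver density. You choose $N > t\,b_t$ and a digit bound $L$ with $\bigl(\sum_j |a_j|\bigr)L < N$. But the two largest coefficients of $\mathcal L(U)$ are $\Theta(b_t)$, so $\sum_j|a_j|=\Theta(b_t)$ and the no-carry condition forces $L=O(1)$. Your digit alphabet then has bounded size, and the resulting set is far too sparse. The paper avoids this by choosing the radix \emph{to be} the largest coefficient (shifted by a tiny $\theta$): working in base $\alpha\pm\theta$, reducing $\mathcal L(U)=0$ modulo the radix annihilates $\alpha$ (it becomes $\mp\theta$) and turns $\alpha'$ into $\alpha'-\alpha\mp\theta$, so the effective digit-level equation (which is precisely the ``ancestor'' array $\mathcal A_1$ or $\mathcal A_2$) has only the remaining, much smaller coefficients. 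The admissible digit bound is then $\lfloor\alpha/\sigma\rfloor$ with $\sigma\ll\alpha$, which is large. This per-level adaptive radix, coupled with the observation that the mod-radix congruence upgrades to an equality because both sides are $<\alpha\pm\theta$, is what makes the construction go through; a fixed radix blocking \emph{all} carries of $\mathcal L(U)$ does not.

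Second, and more fundamentally, your plan never specifies the combinatorial reduction from a general $\mathcal L(U)$ to a one-sided equation, which is the real content of the proof. You anticipate that ``the link lemma would attach higher-scale contributions in stages,'' but without saying what the stage-$i$ auxiliary equation is, the induction has no well-defined inductive step. The paper's answer is the ancestor operation: delete both maxima $\alpha=\max\mathcal A_+$ and $\alpha'=\max\mathcal A_-$ and reinsert $|\alpha-\alpha'|\mp\theta$ and $\theta$ on appropriate sides, producing two candidate ancestors; Algorithm~1 chooses between them according to the deletion character $\chi(U)$ so that $\theta_i$ always lands on the ``heavy'' side, guaranteeing that after $\tau(U)$ steps the array becomes one-sided (Lemma~\ref{string_lemma_3}). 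Your reading of $\tau$ as ``scale-transitions'' and your claim that the spine ``attacks the finest-scale component where only a few small coefficients are active'' are also off: $\tau$ counts delete-the-maximum steps until the underlying cyclic sequence is monotonic, and the base case array $\mathcal A_{[\tau]}$ is not a small-coefficient object but a one-sided invariant equation with all coefficients in $[2^{O(\log^a m)}]$, to which Lemma~\ref{corollary_one_side_along} (the Behrend-sphere result cited from prior work) applies uniformly. Without the ancestor construction, the deletion-character bookkeeping, and the accompanying feasibility propagation (Theorem~\ref{thm_induc_link_lemma}), the induction you outline does not close.
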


By Theorem~\ref{thm_trans_main_thm} and Lemma~\ref{lemma_sum_free_intersection}, it is immediate to prove Theorem~\ref{theorem_main_solution_free}. The only point needed to explain is the constant $c$. In Lemma~\ref{lemma_sum_free_intersection}, the constant $c$ should be the same for different equations. However in Theorem~\ref{thm_trans_main_thm}, the constant $c$ for different $U$ may be different, which depends on the number $\tau\in[0,t-3]$ defined by $U$.  Since the number of choices of $\tau$ is finite, the set of different values of $c$ is finite.  So we can choose the biggest $c$ among them, which satisfies $|M|=\frac m {2^{O(\log^{c} m)}}$ for all possible $U$.

The remaining of this paper is devoted to proving Theorem~\ref{thm_trans_main_thm}, that is, the existence of a large $\mathcal L(U)$-free set for each possible permutation sequence $U$ from $R$. In \cite{Ge2021SparseHN}, the authors proved the existence of a large solution-free set for any invariant equation with exactly one negative/positive coefficient. See below.

\begin{lemma}\label{corollary_one_side_along}
(See \cite{Ge2021SparseHN}.)
Let $k\ge 2$ be a fixed integer, and let $a\in (0, 1)$ be a fixed real number. Suppose that $k$ integers $a_1, a_2, \ldots, a_k\in [2^{O(\log^a m)}]$. Then there exists $M\subset [m]$ such that $|M|\ge \frac{m}{2^{O(\log^{(1+a)\slash 2} m)}}$  and $M$ is solution-free of the equation $a_1x_1+\cdots+a_k x_k =(a_1+\cdots +a_k) x_{k+1}.$
\end{lemma}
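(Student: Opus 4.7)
The plan is a weighted Behrend-type construction. Write $A = a_1 + \cdots + a_k$, so $A \leq k \cdot 2^{O(\log^a m)} = 2^{O(\log^a m)}$, and pick an integer base $B = AD$ for a digit bound $D$ to be optimized later. Let $L = \lfloor \log_B m\rfloor$, so that every integer with $L$ base-$B$ digits lies in $[0, B^L)\subseteq [0,m)$. Define
\begin{equation*}
S = \{x \in [0, B^L) : \text{every base-}B \text{ digit of } x \text{ lies in } [0,D-1]\},
\end{equation*}
and for $x \in S$ let $\mathbf{d}(x) = (d^{(1)},\ldots,d^{(L)}) \in [0,D-1]^L$ be its digit vector. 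Set $\varphi(x) = \sum_{j=1}^L (d^{(j)})^2$, a nonnegative integer no larger than $L(D-1)^2$. Since $|S| = D^L$, pigeonholing over the at most $L D^2$ possible values of $\varphi$ produces some $s^\star$ for which $M := \varphi^{-1}(s^\star)\subseteq S$ has $|M| \geq D^L/(LD^2)$.

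To verify $M$ is solution-free for $a_1x_1+\cdots+a_kx_k = A x_{k+1}$, suppose $x_1,\ldots,x_{k+1}\in M$ satisfy this equation. At each digit position $j$ the two nonnegative integers $\sum_i a_i d_i^{(j)}$ and $A\, d_{k+1}^{(j)}$ are bounded by $A(D-1) < B$, so neither expansion carries and the equation splits into $L$ coordinate-wise identities $\sum_i a_i d_i^{(j)} = A\, d_{k+1}^{(j)}$. Reassembling, $\mathbf{d}_{k+1} = \sum_i p_i \mathbf{d}_i$ in $\mathbb R^L$ where $p_i := a_i/A > 0$ and $\sum_i p_i = 1$. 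The variance identity
\begin{equation*}
\sum_i p_i \|\mathbf{d}_i\|^2 - \Bigl\|\sum_i p_i \mathbf{d}_i\Bigr\|^2 \;=\; \sum_i p_i \bigl\|\mathbf{d}_i - \mathbf{d}_{k+1}\bigr\|^2,
\end{equation*}
together with $\|\mathbf{d}_i\|^2 = \|\mathbf{d}_{k+1}\|^2 = s^\star$ for every $i$, forces each $\mathbf{d}_i = \mathbf{d}_{k+1}$, and hence $x_1=\cdots=x_{k+1}$, a trivial solution.

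Finally, optimize $D$. Write $\mu = \log m$, $\alpha = \log A = O(\log^a m)$, $\delta = \log D$, so $\log B = \alpha + \delta$ and $L \leq \mu/(\alpha+\delta)$. Then
\begin{equation*}
\log\frac{m}{|M|} \;\leq\; L(\log B - \log D) + \log(LD^2) \;=\; \frac{\mu\alpha}{\alpha+\delta} + 2\delta + O(\log \mu).
\end{equation*}
Setting $\delta \asymp \sqrt{\mu\alpha}$ balances the two leading terms at $O(\sqrt{\mu\alpha}) = O(\sqrt{\log m \cdot \log^a m}) = O(\log^{(1+a)/2} m)$, giving $|M|\geq m/2^{O(\log^{(1+a)/2} m)}$ as claimed. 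The only delicate step is the no-carry calibration $B = AD$, which is what forces the digitwise reduction; everything else is standard bookkeeping around the sphere-pigeonhole trick, so I do not expect a real obstacle beyond being careful with the integer rounding in the choices of $B$, $D$, and $L$.
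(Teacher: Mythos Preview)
The paper does not actually prove this lemma; it simply quotes the statement from \cite{Ge2021SparseHN} and uses it as a black box. Your weighted Behrend construction is correct and is essentially the standard argument one would expect in the cited source: restrict digits to a small range so that the weighted sum $\sum a_i d_i^{(j)}$ cannot carry in base $B=AD$, reduce to a coordinate-wise convex-combination identity, and use the sphere/strict-convexity trick to force triviality. The size calculation is right up to harmless bookkeeping (your displayed inequality drops a stray $+\log B$ from the floor in $L$, and $M$ lives in $[0,m)$ rather than $[1,m]$, but a translation and an $O(\delta)$ absorption fix both without affecting the exponent). So your proof is sound and matches the intended approach.
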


Our main idea to prove Theorem~\ref{thm_trans_main_thm} is to establish a link from $\mathcal L(U)$ to some equation of the form in Lemma~\ref{corollary_one_side_along} in some way, and show that the existence of a large solution-free set could be extended through this link. Details will be given later.

\section{Notations and Outline}\label{sec_notations}

We first introduce several notations that will be used in the proof of Theorem~\ref{thm_trans_main_thm}.
\subsection{Terminating numbers and deletion characters}\label{subsec_cyc_diff_and_ter_num}

For a linear equation $ a_1x_1+ a_2x_2+\cdots+a_kx_k=0$, we call $k$ the length of it, and  say it is of type $(s, r)$ if exactly $s$ coefficients are positive and $r$ coefficients are negative ($s+r=k$).  A  linear equation of type $(s, r)$ can be rearranged to the following form
\begin{equation}\label{eq_4}
a_1x_1+\cdots+a_sx_s=a_1'x_{s+1}+\cdots+a_r'x_{s+r},
\end{equation}
such that all $a_i$ and $a_j'$ are positive. Since reordering the indices in each side does not change the type  and the number of solutions, we can simply denote the equation by a pair of multi-sets  $\mathcal A=(\mathcal A_+; \mathcal A_-)$, where $\mathcal A_+=\{a_1,\ldots,a_s\}$ and $\mathcal A_-=\{a_1',\ldots,a_r'\}$.  We call such a pair of multisets a {\it bipartite array}, or just {\it an array} for short. Notice that we do not view  $(\mathcal A_+; \mathcal A_-)$ and  $(\mathcal A_-; \mathcal A_+)$ as the same array or the same equation, since they may have different types. We usually regard the type $(s, r)$ equation and the corresponding bipartite array as the same object; therefore they share the same type, the same length, sometimes the same notation and other properties.
%

From now on, when we mention a permutation sequence, we always assume that the length is at least three.
Given a permutation sequence $U$ and the linear equation $\mathcal L(U)$, let $\mathcal A(U)=(\mathcal A_+; \mathcal A_-)$ be the corresponding array for $\mathcal L(U)$, and denote $\partial U$  the  multiset union $\mathcal A_+\cup \mathcal A_-$. It is easy to see that $\partial U$ is invariant under the shifting operations on $U$, and so is $\mathcal L(U)$. So we can consider $U$ as a cyclic sequence, and call $U$ increasing if  there exists a shift of $U$ such that the sequence is increasing, and similarly for the decreasing property. Increasing and decreasing permutation sequences are called monotonic.
Note that any $3$-permutation sequence is monotonic. For any monotonic $U$, the linear equation $\mathcal L(U)$ defined by it is of type $(s, 1)$ or $(1, s)$ for some $s\geq 2$, and this is the required type in Lemma~\ref{corollary_one_side_along}.


For any object $S$ consisting of finite numbers, denote $\max S$ (resp. $\min S$) the maximum (resp. minimum) number in $S$.
For a $k$-permutation sequence $U$, denote $U'$ as the subsequence of length $k-1$ by deleting $\max U$ from $U$, and keeping the order of the remaining elements.  For any $j<k$, the resultant subsequence from $U$ by $j$ steps of such deletions is denoted as $U^{(j)}$, i.e., $U^{(1)}:=U'$ and $U^{(j+1)}:=(U^{(j)})'$ for any $j$. Define $U^{(0)}:=U$.
The smallest nonnegative integer $j$ such that $U^{(j)}$ is monotonic is called the {\it terminating number} of $U$, denoted by $\tau(U)$, or simply $\tau$ if there is no confusion. Note that this $\tau$ is exactly the parameter used to define the constant $c$ in Theorem~\ref{thm_trans_main_thm}.

\begin{example}\label{example_terminal_number}
The $8$-permutation sequence $U=(3, 6, 8, 4, 5, 1, 7, 2)$ needs three deletion steps before being monotonic, so $\tau(U)=3$. The subsequences after each deletion are listed below. The final sequence $U^{(3)}$ is increasing.
$$\begin{aligned}
U^{(1)}=&(3, 6, 4, 5, 1, 7, 2);& U^{(2)}=(3, 6, 4, 5, 1, 2);\\
U^{(3)}=&(3, 4, 5, 1, 2).&
\end{aligned}$$
\end{example}

\begin{remark}
Since any $3$-permutation sequence is monotonic, $\tau(U)\le k-3$ for any $k$-permutation sequence $U$. The terminating number $\tau(U)=0$ if and only if $U$ itself is monotonic.
\end{remark}

To record the information in each step of deletion,
we define a {\it deletion character} $\chi(U)$ of $U$ as a binary vector of length $\tau(U)$ as follows. Let $\epsilon(U)$ be an indicator of the terminal status of $U$, i.e., $\epsilon=1$ if $U^{(\tau)}$ is increasing, and $\epsilon=0$ otherwise. For each $i\in [0, \tau]$, denote $U^{(i)}=(u^{(i)}_1, \ldots, u^{(i)}_{k-i})$ and denote $\max(i)$ as the subindex of $\max U^{(i)}$ under $\mathbb Z_{k-i}$. Then for any $j\in [\tau]$, define $\chi(U)_j=1+\epsilon(U) \pmod 2$ if $u^{(j-1)}_{\max(j-1)-1} < u^{(j-1)}_{\max(j-1)+1}$, and $\chi(U)_j=\epsilon(U)$ otherwise.
Under this definiton, in Example~\ref{example_terminal_number}, $U^{(3)}$ is increasing, hence  $\epsilon(U)=1$ and $\chi(U)=(1, 0, 0)$.

\subsection{$(a, b)$-Feasible arrays}


Now we focus only on a set $R=\{b_1, \ldots, b_t\}$ satisfying the conditions in  Theorem~\ref{thm_trans_main_thm}, that is, for a fixed integer $t$ and reals $a,b$ satisfying $0<b\le a^t<1$, when $m$ is large enough,
\begin{itemize}
\item[(1)] $\log{b_1}=\Omega(\log^b{m}),$ $\log{b_t}=O(\log^a m);$ and
\item[(2)] for any $i\in [t-1],$ $\log{b_i}=O(\log^a{b_{i+1}})$.
\end{itemize}
Such a set $R$ is  called {\it $(a, b)$-plastic with respect to $m$}.
{ Note that a set $R$ is $(a, b)$-plastic means that on one hand, the smallest $b_1$ should not be too small with respect to $m$ ensured by the parameter $b$; on the other hand, for each $i\in[t-1]$, $b_i$ should be much smaller than $b_{i+1}$ with some extent bounded by the parameter $a$.} Clearly, such requirements ensure that $b_1< b_2<\cdots <b_t$.

It is worth to mention that the above definition only works for sufficiently large  $m$, and the asymptotic symbols $\Omega, O$ are used when $m$ goes to infinity. For clarity, we sometimes write $R(m)$ to indicate that each element of $R$ is a function of $m$. The similar fashion will happen to the remaining definitions in this paper involving functions of $m$ or asymptotic symbols.

Next, we show that if a set $R$ is $(a, b)$-plastic with respect to large enough $m$, then all invariant equations defined by a permutation sequence of $R$ must satisfy certain conditions, which are called $(a, b)$-feasible below.

\begin{definition}\label{def:fea} For   $0<a,b<1$,  an array $\mathcal A(m)=(\mathcal A_+; \mathcal A_-)$ of an invariant linear equation is called  \emph{$(a, b)$-feasible with respect to $m$} if the following properties hold for sufficiently large $m$.

\begin{itemize}
\item[(1)] The elements in the multiset $\mathcal A_+\cup \mathcal A_-$ are mutually unequal.
\item[(2)] Define $\alpha=\max\mathcal A_+$ and $\alpha'=\max\mathcal A_-$. Then $|\alpha'- \alpha|=o(\alpha)$ and $\log {\alpha}=O(\log^a m)$.
\item[(3)] Define the set $Z(\mathcal A)=\mathcal A_+\cup \mathcal A_-\cup\{|\alpha-\alpha'|\}\setminus\{\alpha, \alpha'\}$. Then for any $w\in Z(\mathcal A)$, $\Omega(\log^b m)=\log w= O(\log^a \alpha)$. 
\end{itemize}
\end{definition}

%
%

By Definition~\ref{def:fea}, if a bipartite array $\mathcal A$ is $(a, b)$-feasible with respect to $m$, it is also $(a, b')$-feasible with respect to $m$ for any $b'$ satisfying $0< b'\le b$. The definition of $Z(\mathcal A)$ looks weird. However, it is designed to be a set approximate to the main part of an array which is highly related to $\mathcal A$ and is useful in our recurrence proof of Theorem~\ref{thm_trans_main_thm}. See the ancestors in Definition~\ref{def_ancestor}.


Before proving that all invariant equations defined by $R$ are $(a, b)$-feasible, we present the
 following lemma, which is useful to estimate the order of any linear combinations of elements from $R$, $U$ or $\mathcal A$.
\begin{lemma}\label{lemma_diff_easy}
Let $R=\{b_1, \ldots, b_t\}$ be an $(a, b)$-plastic set with respect to  $m$, and let $d$ be some fixed positive integer. Suppose that $w=\sum_{i=1}^t\omega_i b_i$ with coefficients $\omega_1, \ldots, \omega_t\in\mathbb Z$ and $0<\sum_{i=1}^t|\omega_i|\le d$. Then
 $|w|=\Theta(b_p)$, where $p\in [t]$ is the maximum index satisfying $\omega_p\ne 0$. Consequently, $w$ is nonzero. 
\end{lemma}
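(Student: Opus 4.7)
The plan is to isolate the leading term of $w$. Writing
\[
w = \omega_p b_p + \sum_{i=1}^{p-1} \omega_i b_i,
\]
the first summand contributes $|\omega_p|\, b_p$ with $1 \le |\omega_p| \le d$, so it already has order $\Theta(b_p)$. The task therefore reduces to showing that the tail sum $\bigl|\sum_{i<p}\omega_i b_i\bigr|$ is of strictly smaller order than $b_p$.

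For the tail, I would invoke condition (2) of the $(a,b)$-plastic definition just once: $\log b_{p-1} = O(\log^a b_p)$. Condition (1) gives $\log b_p \ge \log b_1 = \Omega(\log^b m) \to \infty$ as $m\to\infty$, and since $a<1$ we have $\log^a b_p = o(\log b_p)$. Consequently
\[
b_{p-1} = 2^{O(\log^a b_p)} = b_p^{o(1)},
\]
and using $b_i \le b_{p-1}$ for every $i<p$ together with $\sum_{i<p}|\omega_i|\le d$ yields
\[
\Bigl|\sum_{i=1}^{p-1} \omega_i b_i\Bigr| \le d\, b_{p-1} = o(b_p).
\]
Combining the two estimates, $|w| = |\omega_p|\,b_p\,(1+o(1)) = \Theta(b_p)$, and in particular $|w|\ge b_p - d\,b_{p-1} > 0$ for sufficiently large $m$, so $w\ne 0$.

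The only point that deserves a moment of care is uniformity: the $o(b_p)$ bound should not secretly depend on the specific coefficient vector $(\omega_1,\ldots,\omega_t)$. This is automatic because $d$ is a fixed constant and because we dominate all $b_i$ with $i<p$ by the single quantity $b_{p-1}$, absorbing every coefficient into the constant $d$. I do not expect any serious obstacle here; the lemma is essentially a direct consequence of the very rapid growth of the sequence $b_1<\cdots<b_t$ enforced by the plastic condition, namely $b_{i-1}=b_i^{o(1)}$.
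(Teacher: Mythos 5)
Your proof is correct and follows essentially the same route as the paper: isolate the leading term $\omega_p b_p$, bound the tail by $d\,b_{p-1}$, and use the plastic condition (via $\log b_{p-1}=O(\log^a b_p)$ with $\log b_p\to\infty$, which is exactly what the paper's Lemma~\ref{lemma_little_order} packages) to conclude $b_{p-1}=o(b_p)$. The only cosmetic difference is that you re-derive the $b_j=o(b_p)$ estimate from scratch where the paper cites Lemma~\ref{lemma_little_order}, and your tail bound $d\,b_{p-1}$ is slightly tighter than the paper's $(p-1)d\,b_{p-1}$.
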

\begin{proof}
For any given positive integer $d$ and $\omega_1, \ldots, \omega_t\in\mathbb Z$, define $p=\max\{j\in [t]: \omega_j\ne 0\}$. Thus, $w=\sum_{i=1}^p\omega_i b_i$. From Lemma~\ref{lemma_little_order}, for any $j<p$, $b_j=o(b_p)$. In the mean time, for each $i\in [p]$, $|\omega_i|\le d$, which is a positive constant. Thus, $|\sum_{i=1}^{p-1}\omega_ib_i|\le d\sum_{i=1}^{p-1}b_i\le (p-1)d\cdot b_{p-1}= o(b_p)$, and $w=\omega_p b_p\pm o(b_p)$ with some $\omega_p\ne 0$. So $|w|=\Theta(b_p)$.
\end{proof}

\begin{lemma}\label{lemma_star_U_is_feasile} Let $R$ be an $(a, b)$-plastic set with respect to $m$ of size $t$ for some $a, b$ satisfying $0<b\le a^t<1$. Then for any $k\in [3, t]$ and for any $k$-permutation sequence $U$ of $R$, the array $\mathcal A(U)$  is $(a, b)$-feasible with respect to $m$.
\end{lemma}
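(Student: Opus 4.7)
The plan is to verify each of the three conditions in Definition~\ref{def:fea} directly, by translating every magnitude or distinctness question about the coefficients $c_j:=u_j-u_{j-1}$ of $\mathcal L(U)$ (indices cyclic modulo $k$, $u_0:=u_k$) into a bounded-coefficient linear combination of the $b_i$'s, and then invoking Lemma~\ref{lemma_diff_easy} together with the plasticity of $R$.

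Setup: let $i_1<i_2<\cdots<i_k$ be the indices of the elements of $R$ appearing in $U$, and let $p\in[k]$ be the position with $u_p=b_{i_k}$. The two coefficients adjacent to this maximum, $c_p=b_{i_k}-u_{p-1}>0$ and $c_{p+1}=u_{p+1}-b_{i_k}<0$, both have magnitude $\Theta(b_{i_k})$ by Lemma~\ref{lemma_diff_easy}. For every $j\notin\{p,p+1\}$ the difference $c_j$ involves only elements of $\{b_{i_1},\dots,b_{i_{k-1}}\}$, so Lemma~\ref{lemma_diff_easy} gives $|c_j|=\Theta(b_q)$ for some $i_1\le q\le i_{k-1}$; the same estimate applies to $|u_{p-1}-u_{p+1}|$. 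Plasticity condition~(2) yields $\log b_{i_k-1}=O(\log^a b_{i_k})$, and since $i_{k-1}\le i_k-1$, Lemma~\ref{lemma_little_order} gives $b_{i_{k-1}}=b_{i_k}^{o(1)}$. Hence $\alpha=c_p$ and $\alpha'=-c_{p+1}$ are the maxima of $\mathcal A_+$ and $\mathcal A_-$, and every element of $Z(\mathcal A)$ has order $\Theta(b_q)$ for some $i_1\le q\le i_{k-1}$.

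Conditions~(2) and~(3) drop out of these orders of magnitude. For~(2), plasticity condition~(1) gives $\log\alpha=\log b_{i_k}+O(1)\le\log b_t+O(1)=O(\log^a m)$, while $|\alpha-\alpha'|=O(b_{i_{k-1}})=o(b_{i_k})=o(\alpha)$. For~(3), every $w\in Z(\mathcal A)$ satisfies $\log w\ge\log b_1+O(1)=\Omega(\log^b m)$ and $\log w\le\log b_{i_{k-1}}+O(1)=O(\log^a b_{i_k})=O(\log^a\alpha)$. For condition~(1), fix $j\ne j'$ in $[k]$; both $c_j-c_{j'}$ and $c_j+c_{j'}$ can be written as $\sum_{i=1}^t\omega_i b_i$ with $\sum_i|\omega_i|\le 4$, so Lemma~\ref{lemma_diff_easy} forces such a sum to vanish only when every $\omega_i=0$, i.e.\ only when the multiset equality $\{u_j,u_{j'-1}\}=\{u_{j-1},u_{j'}\}$ (difference case) or $\{u_j,u_{j'}\}=\{u_{j-1},u_{j'-1}\}$ (sum case) holds.

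The one non-mechanical step I expect is ruling out these two multiset coincidences. Distinctness of the $u_\ell$'s kills any pairing that would require $u_j=u_{j-1}$ or $u_{j'}=u_{j'-1}$. In the difference case the remaining option $u_j=u_{j'}$ forces $j=j'$, a contradiction; in the sum case the remaining option $u_j=u_{j'-1}$ together with $u_{j'}=u_{j-1}$ cyclically forces $j\equiv j'-1\equiv j-2\pmod k$, hence $k\mid 2$. Our standing assumption $k\ge 3$ excludes this, so $|c_j|\ne|c_{j'}|$ for every $j\ne j'$. With this single combinatorial observation in hand, the remainder of the argument is pure bookkeeping with Lemma~\ref{lemma_diff_easy}, Lemma~\ref{lemma_little_order}, and the plasticity conditions on $R$.
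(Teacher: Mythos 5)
Your proof is correct and takes essentially the same approach as the paper: both reduce every magnitude and distinctness question about the coefficients of $\mathcal L(U)$ to bounded-coefficient linear combinations of the $b_i$'s and then invoke Lemma~\ref{lemma_diff_easy} together with the plasticity of $R$. The only cosmetic difference is in condition~(1), where the paper writes each array entry directly in the positive form $b_i-b_j$ with $i>j$, folding your separate sum and difference cases into a single ``different ordered pairs'' observation; the cyclic argument you spell out is exactly what makes that observation true.
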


\begin{proof} Let $R=\{b_1< \cdots< b_t\}$. By the definition of $(a, b)$-plasticity and  Lemma~\ref{lemma_little_order}, as functions of $m$, $\max\{b_j, j<i\}=o(b_i)$ for any $i\in [t]$. Given two different elements $v$ and $w$ in $\mathcal A(U)$,  they can be expressed by $v=b_i-b_j$ and $w=b_{i'}-b_{j'}$ for two different pairs of different indices $(i, j)$ and $(i', j')$ in $[t]$. So $v-w$ is a linear combination of $\{b_1, \ldots, b_t\}$ satisfying Lemma~\ref{lemma_diff_easy} with $d=4$. Hence  $v-w\neq 0$ and elements in $\mathcal A(U)$ are mutually unequal.

Denote $U=(u_1, \ldots, u_k)$. Without loss of generality, we set $u_1$ be the largest element in $U$ and hence $u_i=o(u_1)$ for any $i\in [2, k]$. By the definition of $\mathcal A(U)$, it is easy to check that the largest two elements in $\mathcal A(U)_+$ and $\mathcal A(U)_-$ are  $\alpha= u_1- u_k=\Theta (u_1)$ and $\alpha'= u_1- u_2=\Theta (u_1)$, respectively.
 So $\log \alpha= \Theta(\log u_1)= O(\log b_t)= O(\log^a m)$ and $|\alpha-\alpha'|= |u_k -u_2|= o(u_1)= o(\alpha)$.

For elements in $Z(\mathcal A(U))$, $|\alpha- \alpha'|=|u_k -u_2|= \Theta({\max\{u_k, u_2\}})$. So $\log |\alpha- \alpha'|=O(\log^a u_1)= O(\log^a \alpha)$ and $\log |\alpha- \alpha'|\ge \Omega(\log b_1)\ge \Omega(\log^b m).$ All other elements in $Z(\mathcal A(U))$ have the form $w=u_i - u_j$ with $i\neq 1\neq j$. So $w=\Theta({\max\{u_i, u_j\}})$, which has the same bounds as $|\alpha- \alpha'|$.
\end{proof}



\subsection{An outline of the proof for Theorem~\ref{thm_trans_main_thm}}

Suppose that $m'=m'(m)$ is a positive integer-valued function which goes to infinity with $m$. Given $a, b\in (0, 1)$ and an integer $k\ge 3$, we say an $(a, b)$-feasible array $\mathcal A=\mathcal A(m)$ of length $k$ with respect to $m$ is  \emph{$c$-good under location $m'$} for some $c=c(k, a, b)\in (0, 1)$, if there exists a subset $M\subset [m']$ such that $M$ is solution-free of $\mathcal A$, and $|M|\ge\frac{m'}{2^{O(\log^c m')}}$.
Under this definition, we can rewrite Theorem~\ref{thm_trans_main_thm} in the following way.
\begin{thmbis}{thm_trans_main_thm} For a fixed integer $t$ and any reals $a,b$ satisfying $0<b\le a^t<1$, when $m$ is large enough, let $R(m)$ be an $(a, b)$-plastic set with respect to $m$ of size $t$.
 For any $k$-permutation sequence $U$ of $R$ with $3\le k\le t$, the  array $\mathcal A(U)$ is $c$-good under location $m$ for some $c=c(\tau(U), a)\in (0, 1)$.
\end{thmbis}

By Lemma~\ref{lemma_star_U_is_feasile}, $\mathcal A(U)$ is an $(a, b)$-feasible array. The outline of the proof for Theorem~\ref{thm_trans_main_thm} is as follows.

First, we point out a {\it goodness inheriting relationship} among $(a, b)$-feasible arrays with respect to a common $m$. That is to say, for any non-monotonic $(a, b)$-feasible array $\mathcal A$, we can find two kinds of other arrays $\mathcal A_1$ and $\mathcal A_2$, which we call ancestors of $\mathcal A$, such that if any ancestor $\mathcal A_i$ is $c$-good under a certain location, then $\mathcal A$ is $c$-good under a wide range of locations. If this happens, we say the goodness of $\mathcal A$ inherits from its ancestors. See the link lemma (Lemma~\ref{lemma_linkage}).

Then we show that for any $k$-permutation sequence $U$ of some $(a, b)$-plastic $R$ with respect to some large enough $m$, we can always expand a long string of $(a, b')$-feasible arrays for some $b'\le b$ with respect to $m$ from $\mathcal A(U)$, denoted by $\mathcal A_{[0]}= \mathcal A(U), \mathcal A_{[1]}, \ldots, \mathcal A_{[\tau]}$, such that each succeeding array is always an ancestor of the preceding one (see Theorem~\ref{thm_induc_link_lemma}), and the last array is always monotonic (see Lemma~\ref{string_lemma_3}).

From the goodness of monotonic arrays by Lemma~\ref{corollary_one_side_along}, we can inherit the goodness property along the string of ancestors, finally resulting in the goodness of $\mathcal A(U)$ under the location $m$ (see Proof of Theorem~\ref{thm_trans_main_thm}). The next two sections are the detailed proving process following this outline.




\section{The Link Lemma}\label{sec_link_lemma}

For a given non-monotonic $(a, b)$-feasible array $\mathcal A$ with respect to $m$, we find that the goodness of some other array with respect to the same $m$ can lead to the goodness of $\mathcal A$. Such an array is called an {\it ancestor} of $\mathcal A$. For a given $\mathcal A$, it may have many ancestors in different types. Next we give a detailed definition for two special types of ancestors for each non-monotonic feasible $\mathcal A$.

\begin{definition}\label{def_ancestor}
Given any large enough $m$ and a non-monotonic $(a, b)$-feasible array $\mathcal A(m)=(\mathcal A_+; \mathcal A_-)$ for some fixed $a, b\in (0, 1)$, define $\alpha=\max\mathcal A_+$ and $\alpha'=\max\mathcal A_-$ as in Definition~\ref{def:fea}.
Define $\delta(m)=\min(\mathcal A_+\cup \mathcal A_-\cup\{|v-w|: v\ne w\in \mathcal A\})$, which is determined by $\mathcal A(m)$.
A positive integer $\theta(m)$  is called \emph{reproducible} for $\mathcal A(m)$ if $\theta$ goes to infinity with $m$ and  $\theta(m)=O(2^{\log^a \delta(m)})$. Suppose there exists a reproducible $\theta$ for $\mathcal A(m)$. \\
If $\alpha'>\alpha$, define
\begin{equation*}
\begin{array}{ll}
\mathcal A_1=& (\mathcal A_+\backslash\{\alpha\}; \mathcal A_-\backslash\{\alpha'\}\cup \{\alpha'-\alpha-\theta, \theta\});\\
\mathcal A_2=& (\mathcal A_+\backslash\{\alpha\}\cup\{\theta\}; \mathcal A_-\backslash\{\alpha'\}\cup \{\alpha'-\alpha+\theta\}).
\end{array}
\end{equation*}
Otherwise, since numbers in $\mathcal A$ are mutually unequal, $\alpha'<\alpha$. In this case, define
\begin{equation*}
\begin{array}{ll}
\mathcal A_1=& (\mathcal A_+\backslash\{\alpha\}\cup\{\alpha-\alpha'-\theta, \theta\}; \mathcal A_-\backslash\{\alpha'\});\\
\mathcal A_2=& (\mathcal A_+\backslash\{\alpha\}\cup\{\alpha-\alpha'+\theta\}; \mathcal A_-\backslash\{\alpha'\}\cup \{\theta\}).
\end{array}
\end{equation*}
The unions and exclusions above are multiset operations. By the choice of $\theta$, all numbers in $\mathcal A_1$ and $\mathcal A_2$ for both cases are positive, and thus $\mathcal A_1$ and $\mathcal A_2$ are bipartite arrays. We call $\mathcal A_1$ and $\mathcal A_2$ the first and the second type of \emph{ancestors of $\mathcal A$ by $\theta$}, respectively.
\end{definition}

\begin{example}
  To show that the two types of ancestors of $\mathcal A$ by $\theta$ exist, we only need to show that the chosen $\theta$ is reproducible for $\mathcal A$.
For example, suppose $\mathcal A=\mathcal A(U)$ for some permutation sequence $U$ of an $(a,b)$-plastic set. Let $S=\mathcal A(U)_+\cup \mathcal A(U)_-\cup\{|v-w|: v\ne w\in \mathcal A(U)\}$. Then any element in $S$ can be expressed as a linear combination of at most four members in $U$. By Lemma~\ref{lemma_diff_easy} with $d=4$, any element $w$ in $S$ is positive with order $w=\Omega(\min U)$. As a result, $\delta=\min S=\Omega(\min U)= 2^{\Omega(\log^b m)}$, which goes to infinity as $m$ goes to infinity. So $\theta=\lfloor2^{\log^a (\min U)}\rfloor=O(2^{\log^a \delta})$ is reproducible for $\mathcal A(U)$.
\end{example}

\begin{remark}\label{remark_of_link_lemma} 
We have the following observations about Definition~\ref{def_ancestor}.
\begin{itemize}
\item[(1)] If $\mathcal A$ is of $(s, r)$ type, then $\mathcal A_1$ and $\mathcal A_2$ are both invariant with types from $(s-1, r+1)$, $(s, r)$ and $(s+1, r-1)$.
\item[(2)] The element $\theta$ is a common element in $\mathcal A_1$ and $\mathcal A_2$. As $\delta(m)$ goes to infinity, from Lemma~\ref{lemma_little_order}, $\theta=o(\delta)$ and hence $\theta$ is the smallest element in both $\mathcal A_1$ and $\mathcal A_2$.
\end{itemize}
\end{remark}

Now we give the link lemma, where notations have  the same meaning as in Definition~\ref{def_ancestor}.

\begin{lemma}\label{lemma_linkage} (Link lemma.) Let $0< a, b<1$ be fixed, and let $\mathcal A(m)$ be any non-monotonic $(a, b)$-feasible array with respect to some large enough $m$. Suppose $\mathcal A_1$ and $\mathcal A_2$ are the first and the second type of ancestors of $\mathcal A$ by some integer $\theta$.
If either $\mathcal A_1$ or $\mathcal A_2$ is $c$-good under location $\lfloor \min\{\alpha, \alpha'\}\slash \sigma\rfloor$, where $\sigma$ is the element sum in one side of the corresponding ancestor, then $\mathcal A$ is $c'$-good for some $c'= c'(c, a)$ under any location $m'$ satisfying $\log \alpha=O(\log^a m')$.
\end{lemma}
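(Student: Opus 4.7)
The plan is to build the desired $\mathcal A$-free set $M\subseteq[m']$ explicitly from the given ancestor-free set $M_0$ via a lifting construction, and to establish $\mathcal A$-freeness of $M$ by a descent argument that converts any nontrivial $\mathcal A$-solution in $M$ into a nontrivial ancestor-solution in $M_0$, contradicting the hypothesis. I treat the case $\alpha'>\alpha$; the other case is symmetric after exchanging the roles of $\mathcal A_+$ and $\mathcal A_-$.

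\emph{Construction.} Let $\sigma_{\mathrm{anc}}$ denote the one-sided sum of the chosen ancestor, so that $M_0\subseteq[L_{\mathrm{anc}}]$ with $L_{\mathrm{anc}}=\lfloor\alpha/\sigma_{\mathrm{anc}}\rfloor$ and $|M_0|\ge L_{\mathrm{anc}}/2^{O(\log^c L_{\mathrm{anc}})}$. The natural first attempt is a single-base product $M=\{Bu+v:u\in P,\,v\in M_0\}$, where $B=\Theta(\alpha)$ is taken slightly larger than $(\sum_{a\in\mathcal A_+\cup\mathcal A_-}a)\cdot L_{\mathrm{anc}}$ and $P\subseteq[\lfloor m'/B\rfloor]$ is a Behrend-type set of density at least $2^{-O(\sqrt{\log(m'/B)})}$. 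This choice guarantees $M\subseteq[m']$ and $|M|=|P|\cdot|M_0|\ge m'/2^{O(\log^{c'}m')}$ for some $c'=c'(c,a)\in(0,1)$, using $\log\alpha=O(\log^a m')$ and Lemma~\ref{lemma_little_order}.

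\emph{Descent and main obstacle.} For any solution $(x_i)$ of $\mathcal A$ in $M$, write $x_i=Bu_i+v_i$ with $u_i\in P$ and $v_i\in M_0$; the choice of $B$ forces the simultaneous layerwise equations $\sum_+a_iu_i=\sum_-a_j'u_j'$ on $P$ and $\sum_+a_iv_i=\sum_-a_j'v_j'$ on $M_0$. The crucial hurdle is that neither layer is automatically an ancestor equation. Even assuming the Behrend structure of $P$ eliminates all nontrivial $u$-solutions, the $v$-layer is still an instance of $\mathcal A$ on $M_0$, and the naive identification $(z_1,z_2)=(v_\alpha,v_{\alpha'})$ in $\mathcal A_1$ produces the identity only modulo an error term $(\alpha'-\theta)(v_{\alpha'}-v_\alpha)$, which vanishes precisely when $v_\alpha=v_{\alpha'}$. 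To close the gap, I would refine the construction so that the decomposition additionally forces the collapse $v_\alpha=v_{\alpha'}$ between the two ``big-coefficient'' indices---for example, by passing to a multi-digit Behrend-sphere construction in which a quadratic side constraint couples the $\alpha$-digit with the $\alpha'$-digit and forces their equality by a convexity argument, or by inserting an auxiliary decomposition layer at base $\theta$ (which exists since $\theta$ is reproducible for $\mathcal A$) whose own layer-collapse feeds back into the main layer. Once $v_\alpha=v_{\alpha'}$ is forced, the split $\alpha'-\alpha=(\alpha'-\alpha-\theta)+\theta$ shows the residual bottom-layer equation is literally $\mathcal A_1$ on $M_0$, and ancestor-freeness of $M_0$ completes the descent; the case of $\mathcal A_2$ is handled symmetrically with the complementary split. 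The technical heart of the argument is balancing the strength of this collapse-forcing constraint against the density $|M|$; the exponent $c'=c'(c,a)\in(0,1)$ emerges from this trade-off.
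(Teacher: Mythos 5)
You have correctly set up the lifting framework and, to your credit, you have honestly flagged the exact point where it breaks: writing $x_i = Bu_i + v_i$ and splitting the equation layerwise leaves the bottom ($v$) layer as an instance of $\mathcal A$ itself on $M_0$, not of the ancestor $\mathcal A_1$ (or $\mathcal A_2$), and $M_0$ is only known to be ancestor-free. Your proposed repairs (a quadratic Behrend-sphere side constraint to force $v_\alpha = v_{\alpha'}$, or an auxiliary base-$\theta$ layer) are left as sketches and do not actually close the gap; in particular, forcing $v_\alpha = v_{\alpha'}$ via a convexity argument would require a nontrivial additional structure on $M_0$ that the hypothesis does not supply. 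There is also a smaller problem with the top layer: you ask that $P$ be a Behrend set and assert that this ``eliminates all nontrivial $u$-solutions,'' but Behrend sets are $3$-AP-free, not free of the general invariant equation $\sum_+ a_i u_i = \sum_- a_j' u_j'$, so even the $u$-layer is not handled by what you wrote. As stated the argument does not prove the lemma.

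The paper closes the gap you identified by a different, and simpler, device: it abandons the two-layer product $P\times M_0$ entirely and instead expands every element of $[m']$ in base $\alpha+\theta$ (for $\mathcal A_1$) or base $\alpha-\theta$ (for $\mathcal A_2$), restricting \emph{every} digit to lie in $B-1$ where $B\subset[\lfloor\alpha/\sigma\rfloor]$ is the given ancestor-free set. Reading the $\mathcal A$-equation at the first digit position $h$ where the purported solution differs, and reducing modulo $\alpha+\theta$, the coefficient $\alpha=a_s$ becomes $-\theta$ and $\alpha'=a_r'$ becomes $\alpha'-\alpha-\theta$ --- which is precisely the coefficient rewriting that defines $\mathcal A_1$. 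Since each digit is at most $\lfloor\alpha/\sigma\rfloor - 1$ and $\sigma$ is the one-sided sum, both sides are below $\alpha+\theta$, so the congruence is in fact an equality, and the $h$-th digits form an $\mathcal A_1$-solution in $B-1$, contradicting that $B-1$ is $\mathcal A_1$-free. This is exactly the ``digit-level collapse'' you were trying to engineer, but it comes for free from the modular arithmetic of the base choice, with no auxiliary layer or collapse-forcing constraint needed. The density count $|M|=|B|^\ell$ with $\ell=\lfloor\log m'/\log(\alpha+\theta)\rfloor$ then gives $c'=1-a+a\max\{a,c\}$ via $\log\alpha=O(\log^a m')$, which is the trade-off you anticipated.
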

\begin{proof}
Suppose $\mathcal A(m)=(\{a_1, \ldots, a_s\}; \{a_1', \ldots, a_r'\})$  for some $s, r\ge 2$, and two maximum elements $\alpha=a_s$, $\alpha'=a_r'$ for both sides.  We only prove the case when $\alpha'>\alpha$. To get the other half result, one only needs to interchange the two parts of $\mathcal A$ and change $\alpha$ in the following analysis by $\alpha'$.

Suppose $\mathcal A_1$ is $c$-good for some fixed $0<c<1$. Then $\sigma=\sum_{i=1}^{s-1}a_i.$ From the $(a, b)$-feasibility of $\mathcal A$ and Lemma~\ref{lemma_little_order}, $\sigma=o(\alpha)$ and $\log \sigma=O(\log^a \frac{\alpha}{\sigma})$.

The location choice $\lfloor \min\{\alpha, \alpha'\}\slash \sigma\rfloor$ goes to $\lfloor\alpha\slash\sigma\rfloor$ in this case, which goes to infinity with $m$ from $\sigma=o(\alpha)$ and $\alpha=\exp(\Omega(\log^b m))$. For the $c$-goodness of $\mathcal A_1$ under location $\lfloor\alpha\slash\sigma\rfloor$, there exists a subset of $[\lfloor\alpha\slash\sigma\rfloor]$, denoted by $B$, such that $B$ is $\mathcal A_1$-free  and $|B|\ge \frac{\alpha}\sigma\slash 2^{O(\log^c (\frac{\alpha}\sigma))}$. So
\begin{equation}\label{ineq_3}
\begin{aligned}
\log |B|\ge & \log (\alpha)-\log \sigma- O(\log^c (\frac {\alpha}\sigma))\\
\ge & \log (\alpha)-O(\log^a \alpha)- O(\log^c \alpha)\\
= & \log(\alpha)- O(\log^{\max\{a, c\}}\alpha).
\end{aligned}
\end{equation}

We expand the numbers of $[m']$ in  base $(\alpha+\theta)$. Denote $\ell= \lfloor\frac{\log m'}{\log (\alpha+\theta)}\rfloor$, which is the largest number of digits that all expansions of numbers in $[0, m'-1]$ can cover.  
Let $M$ consist of those numbers in $ [(\alpha+\theta)^\ell-1]$ whose development in base $\alpha+\theta$ contains only the digits in $B-1=\{b-1:b\in B\}$. That is, if we map a number $p=\sum_{i=0}^{\ell-1}\lambda_i(\alpha+\theta)^i\in [(\alpha+\theta)^\ell-1]$  to a vector $\bm\lambda(p)=(\lambda_0, \lambda_1, \ldots, \lambda_{\ell-1})\in\mathbb Z_{\alpha+\theta}^\ell$, $M$ consists of all vectors of length $\ell$ over $B-1$.


Next we check that $M$ is indeed solution-free of $\mathcal A$. If there exist $s+r$ integers $m_1, m_2, \ldots, m_s,$ $ m_{s+1}, \ldots, m_{s+r}$ in $M$ forming a solution to $\mathcal A$, we need to show that all of them are equal. For any $i\in [s+r]$, denote the development of $m_i$ as $\bm\lambda(m_i)=(\lambda_{0, i}, \lambda_{1, i}, \ldots, \lambda_{\ell-1, i})$. Denote $h$ as the smallest number in $[0,\ell-1]$ such that $\lambda_{h, 1}, \ldots, \lambda_{h, (s+r)}$ are not all the same. Then equation $\mathcal A$  modulo ${(\alpha+\theta)^{h+1}}$ becomes
$$a_1\lambda_{h, 1}+a_2\lambda_{h, 2}+\cdots +a_s\lambda_{h, s}\equiv a_1'\lambda_{h, (s+1)}+\cdots+ a_r'\lambda_{h, (s+r)}\pmod{(\alpha+\theta)}.$$
Notice that $\alpha=a_s$, so
$$\sum_{i=1}^{s-1}a_i\lambda_{h, i} \equiv \sum_{j=1}^{r-1} a_j'\lambda_{h, (s+j)}+ (a_r'-a_s-\theta)\lambda_{h, (s+r)}+ \theta\lambda_{h, s}\pmod{(\alpha+\theta)}.$$
Since each $\lambda_{i, j}\in B-1\subset  [0, \lfloor{\alpha}\slash \sigma\rfloor-1],$  both sides of the above equation are  upper bounded by $(\lfloor\frac{\alpha}\sigma\rfloor-1)\sigma< \alpha+ \theta$. So the congruence modulo symbol actually implies the equivalence,
$$\sum_{i=1}^{s-1}a_i\lambda_{h, i}= \sum_{j=1}^{r-1} a_j'\lambda_{h, (s+j)}+ (a_r'-a_s-\theta)\lambda_{h, (s+r)}+ \theta\lambda_{h, s}.$$
This means $\lambda_{h, 1}, \ldots, \lambda_{h, (s+r)}$, as elements in $B-1$, form a solution to $\mathcal A_1$. Since $B$ is $\mathcal A_1$-free  and $\mathcal A_1$ is invariant, $B-1$ is also $\mathcal A_1$-free. Hence $\{\lambda_{h, i}\}_{ i\in [s+r]}$ is a trivial solution,
contradicting to that $\lambda_{h, 1}, \ldots, \lambda_{h, (s+r)}$ are not all the same. This means such an $h$ does not exist and $m_1, m_2, \ldots, m_{s+r}$ are all equal.

The only thing left is to give a lower bound on the size of $M$. By the definition,
$$|M|=|B|^\ell\ge |B|^{\frac{\log m'}{\log (\alpha+\theta)}-1}=\Omega({m'}^{\frac{\log |B|}{\log (\alpha+\theta)}-\frac{\log |B|}{\log m'}}).$$
Since $\log \alpha = O(\log^a m')$, $\frac{\log |B|}{\log m'}\le \frac{\log \alpha}{\log m'} =O(\log^{\frac{a-1}a} \alpha)$. If we denote $c_1=\max\{\max(a, c)-1, \frac{a-1}{a}\}<0$, $c_2= ac_1$ and $c'=c_2+1\in (0, 1)$,  then
$$
\begin{aligned}
|M|=& \Omega({m'}^{\frac{\log |B|}{\log (\alpha+\theta)}-\frac{\log |B|}{\log m'}})\overset{Eq. (\ref{ineq_3})}{=} \Omega({m'}^{1-O(\log^{c_1}\alpha)})\\
=&\frac {m'} {{m'}^{O(\log^{c_1}\alpha)}}\ge \frac {m'} {{m'}^{O(\log^{c_2} {m'})}}=\frac {m'} {2^{O(\log^{c'} m')}}.
\end{aligned}$$
So $\mathcal A$ is $c'$-good under location $m'$.

As for the case of $\mathcal A_2$, we use the base $(\alpha- \theta)$ rather than $(\alpha+ \theta)$. Here $\sigma= \sum_{i=1}^{s-1}a_i+\theta$. The solution-free set $B$ of $\mathcal A_2$, {$\ell=\lfloor\frac{\log m'}{\log(\alpha-\theta)}\rfloor$}, and the candidate solution-free set $M=\big\{ \sum_{i=0}^{\ell-1}\lambda_i(\alpha-\theta)^i: \lambda_i\in B-1  \big\}$ are similarly defined. The size of $M$ can be bounded as in the case of $\mathcal A_1$ by the same analysis. Here we only give the proof of checking that $M$ is $\mathcal A$-free as follows, while other process is left to the reader.

If there exist $s+r$ integers $m_1, m_2, \ldots, m_{s+r}$ in $M$ forming a solution to $\mathcal A$, denote $\bm\lambda(m_i)=(\lambda_{0, i}, \lambda_{1, i}, \ldots, \lambda_{\ell-1, i})$ for any $i\in [s+r]$, and denote $h$ as the smallest number in $[0,\ell-1]$ such that $\lambda_{h, 1}, \ldots, \lambda_{h, (s+r)}$ are not all the same.  Then equation $\mathcal A$  modulo {$(\alpha-\theta)^{h+1}$} becomes 
$$a_1\lambda_{h, 1}+a_2\lambda_{h, 2}+\cdots +a_s\lambda_{h, s}\equiv a_1'\lambda_{h, (s+1)}+\cdots+ a_r'\lambda_{h, (s+r)}\pmod{(\alpha- \theta)}.$$
This leads to
$$\sum_{i=1}^{s-1}a_i\lambda_{h, i} + \theta\lambda_{h, s}\equiv \sum_{j=1}^{r-1} a_j'\lambda_{h, (s+j)}+ (a_r'-a_s+\theta)\lambda_{h, (s+r)}\pmod{(\alpha- \theta)},$$by noticing that $\alpha=a_s$.
Since both sides are all upper bounded by $\alpha- \theta$, the congruence modulo symbol can be replaced by the equivalence symbol,
$$\sum_{i=1}^{s-1}a_i\lambda_{h, i} + \theta\lambda_{h, s}= \sum_{j=1}^{r-1} a_j'\lambda_{h, (s+j)}+ (a_r'-a_s+\theta)\lambda_{h, (s+r)}.$$
Then as elements in $B-1$, $\lambda_{h, 1}, \ldots, \lambda_{h, (s+r)}$ form a solution to $\mathcal A_2$, which must be trivial. Then by the same analysis as in $\mathcal A_1$, such an $h$ does not exist and $m_1, m_2, \ldots, m_{s+r}$ are equal.
\end{proof}
\begin{remark}\label{remark_parameter_c}
By the proving process of the link lemma, the exact expression of $c'$  is $c'=a\max\{\max\{a, c\}-1, \frac{a-1}a\}+1= 1-a+ a\max\{a, c\}$. Here we do not take efforts to find the best $c'$ satisfying the link lemma.
\end{remark}

Given three positive integers determined by $m$ for some large enough $m$: $h=h(m)$, $f=f(m)$ and $g=g(m)$, we say $g$  {\it approximates} $f$ with the scale $h$, denoted by $g\overset h\approx f$, if $|g-f|=O(h)$. It is easy to see that the approximation relationship with the same scale is reversible and transitive through finitely many steps. This definition is not trivial only when the scale $h$ is much smaller than the evaluated $f$ and $g$. Given two arrays $\mathcal A$ and $\mathcal A'$ whose elements are determined by $m$, for example, they are all $(a, b)$-feasible with respect to $m$, we say $\mathcal A$ approximates $\mathcal A'$ with the scale $h(m)$ if they have the same type and the corresponding coefficients approximate each other with the common scale $h$. The approximation relationships of other structures, such as sets and multisets, can be similarly defined.


The following lemma shows that if $\mathcal A$ is an array considered in Theorem~\ref{thm_trans_main_thm}, then its ancestors have similar structures to the first step deletion of it.

\begin{lemma}\label{le-appro}
Suppose $R(m)$ is $(a, b)$-plastic with respect to some large enough $m$, and $U$ is a $k$-permutation sequence of $R$.
Under any given reproducible $\theta$, the two ancestors $\mathcal A_1$ and $\mathcal A_2$ of $\mathcal A(U)$ by $\theta$ satisfy \begin{equation}\label{eq-scal}
               \mathcal A_1\backslash\{\theta\}\overset\theta\approx \mathcal A (U^{(1)})\overset\theta\approx \mathcal A_2 \backslash\{\theta\}.
             \end{equation}
\end{lemma}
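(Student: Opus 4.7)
The plan is to directly compare the multisets on both sides of \eqref{eq-scal} by an explicit case analysis. First, since $\mathcal{L}(U)$ and hence $\mathcal{A}(U)$ is invariant under cyclic shifts of $U$, I may assume without loss of generality that $u_1=\max U$. Then, unwrapping the definition of $\mathcal{L}(U)$ in \eqref{eqlu}, the only positive coefficient involving $u_1$ is $u_1-u_k$ (the coefficient of $x_1$) and the only coefficient equal to a negative value involving $u_1$ is $u_2-u_1$ (the coefficient of $x_2$); all other coefficients $u_{i+1}-u_i$ for $2\le i\le k-1$ do not involve $u_1$. Just as in the proof of Lemma~\ref{lemma_star_U_is_feasile}, this gives $\alpha=u_1-u_k=\max\mathcal A(U)_+$ and $\alpha'=u_1-u_2=\max\mathcal A(U)_-$.

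Next, $U^{(1)}=(u_2,u_3,\ldots,u_k)$ by the definition of the deletion operation, so the linear equation $\mathcal L(U^{(1)})$ has coefficients $u_2-u_k$ (of $y_1$) and $u_{i+1}-u_i$ for $2\le i\le k-1$. Comparing with $\mathcal L(U)$, the passage to $\mathcal A(U^{(1)})$ amounts to removing $\alpha$ from the positive side and $\alpha'$ from the negative side, and inserting a single element equal to $|u_k-u_2|$ on the side determined by the sign of $u_2-u_k$. I then split into the two cases used in Definition~\ref{def_ancestor}.

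In the case $\alpha'>\alpha$ (equivalently $u_k>u_2$), one has $\alpha'-\alpha=u_k-u_2>0$, so $\mathcal A(U^{(1)})$ has positive side $\mathcal A(U)_+\setminus\{\alpha\}$ and negative side $\mathcal A(U)_-\setminus\{\alpha'\}\cup\{u_k-u_2\}$. By Definition~\ref{def_ancestor}, $\mathcal A_1\setminus\{\theta\}$ has the same positive side and negative side $\mathcal A(U)_-\setminus\{\alpha'\}\cup\{u_k-u_2-\theta\}$, while $\mathcal A_2\setminus\{\theta\}$ has negative side $\mathcal A(U)_-\setminus\{\alpha'\}\cup\{u_k-u_2+\theta\}$. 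Thus each of $\mathcal A_1\setminus\{\theta\}$ and $\mathcal A_2\setminus\{\theta\}$ has the same type as $\mathcal A(U^{(1)})$ and agrees with it except in one coordinate, where the values differ by exactly $\theta$. The opposite case $\alpha>\alpha'$ (equivalently $u_2>u_k$) is completely symmetric: here $u_2-u_k$ goes to the positive side of $\mathcal A(U^{(1)})$, and a straightforward bookkeeping using the second branch of Definition~\ref{def_ancestor} yields the analogous conclusion.

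In both cases the corresponding entries of $\mathcal A_i\setminus\{\theta\}$ and $\mathcal A(U^{(1)})$ differ by $\theta$, and all other entries agree exactly, so by the definition of $\overset{h}{\approx}$ we obtain $\mathcal A_1\setminus\{\theta\}\overset{\theta}{\approx}\mathcal A(U^{(1)})\overset{\theta}{\approx}\mathcal A_2\setminus\{\theta\}$. There is no serious obstacle here; the only care needed is keeping the multiset identifications of which old elements vanish and which new element appears on which side consistent with the two branches of Definition~\ref{def_ancestor}, and checking that the lengths and types match as recorded in Remark~\ref{remark_of_link_lemma}(1).
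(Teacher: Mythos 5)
Your proposal is correct and follows essentially the same route as the paper's own proof: normalize $U$ so that $u_1=\max U$, write out $\mathcal A(U)$ and $\mathcal A(U^{(1)})$ explicitly, observe that they differ by replacing $\alpha,\alpha'$ with the single element $|u_2-u_k|$ on the appropriate side, and then compare coordinate-by-coordinate against $\mathcal A_1\setminus\{\theta\}$ and $\mathcal A_2\setminus\{\theta\}$ from Definition~\ref{def_ancestor}. The only cosmetic difference is that the paper works out the case $\alpha>\alpha'$ in full and leaves $\alpha'>\alpha$ to the reader, while you do the reverse; both are fine.
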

\begin{proof}
Denote $U=(u_1, \ldots, u_k)$ and  $\mathcal A=(\{a_1, \ldots, a_s\}; \{a_1', \ldots, a_r'\})$ with $k=r+s$. For the existence of ancestors, $r, s\ge 2$. From $(a, b)$-feasibility of $\mathcal A$, elements in $\mathcal A$ are mutually unequal. Suppose the two maximum elements in both sides of $\mathcal A$ are $\alpha=a_s$ and $\alpha' = a_r'$, respectively, satisfying $\alpha'<\alpha$. Let $u_1$ be the maximum element in $U$. Then $a_s= u_{1}-u_{k}$, $a_r'= u_{1}-u_{2}$,  $u_{2}- u_{k}= a_s- a_r'\in\mathcal A(U^{(1)})_+$ and $\mathcal A(U^{(1)})=(\{a_1, \ldots, a_{s-1}, a_s-a_r'\}; \{a_1', \ldots, a_{r-1}'\})$. By definition, $\mathcal A_1=(\mathcal A_+\backslash\{\alpha\}\cup\{\alpha-\alpha'-\theta, \theta\}; \mathcal A_-\backslash\{\alpha'\})= (\{a_1, \ldots, a_{s-1}, a_s-a_r'-\theta, \theta\}; \{a_1', \ldots, a_{r-1}'\})$,  so $$\mathcal A_1\backslash\{\theta\} = (\{a_1, \ldots, a_{s-1}, a_s-a_r'-\theta\}; \{a_1', \ldots, a_{r-1}'\})\overset\theta\approx \mathcal A (U^{(1)}).$$ Similarly, $\mathcal A_2=(\mathcal A_+\backslash\{\alpha\}\cup\{\alpha-\alpha'+\theta\}; \mathcal A_-\backslash\{\alpha'\}\cup\{\theta\})= (\{a_1, \ldots, a_{s-1}, a_s-a_r'+\theta,\}; \{a_1', \ldots, a_{r-1}', \theta\})$ and hence $$\mathcal A_2\backslash\{\theta\} = (\{a_1, \ldots, a_{s-1}, a_s-a_r'+\theta\}; \{a_1', \ldots, a_{r-1}'\})\overset\theta\approx \mathcal A (U^{(1)}).$$ The case when $\alpha'>\alpha$ can be similarly checked.\end{proof}

In general, the phenomenon in Lemma~\ref{le-appro} can go inductively and we have the following theorem.

\begin{theorem}\label{thm_induc_link_lemma} Let $R(m)$ be an $(a, b)$-plastic set of size $t$ with respect to some large enough $m$, and let $U$ be some non-monotonic $k$-permutation sequence of $R$ with $k>3$. Then
we can always generate a string 
 $$\mathcal A(U)=\mathcal A_{[0]}, \mathcal A_{[1]}, \ldots, \mathcal A_{[\tau(U)]}$$
by a sequence of choices of theta:
$\theta_1:=\lfloor2^{\log^a (\min U)}\rfloor$, and $\theta_{j+1}:=\lfloor2^{\log^a \theta_j}\rfloor$ for  $j\in[\tau-1]$,
such that for each $i\in [\tau]$:
\begin{itemize}
\item[(1)] $\mathcal A_{[i]}$ can be any type of ancestors of $\mathcal A_{[i-1]}$ by $\theta_i$;
\item[(2)] $\theta_1, \ldots, \theta_i$ are all contained in array $\mathcal A_{[i]}$ as coefficients (may be in different parts);
\item[(3)] $\mathcal A_{[i]}\backslash\{\theta_1, \ldots, \theta_i\}\overset{\theta_1}\approx \mathcal A (U^{(i)})$;
\item[(4)] $\mathcal A_{[i]}$ is $(a, ba^{i})$-feasible with respect to $m$.
\end{itemize}
\end{theorem}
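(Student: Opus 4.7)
The plan is to prove Theorem~\ref{thm_induc_link_lemma} by induction on $i\in\{1,\ldots,\tau\}$. The base case $i=1$ is essentially Lemma~\ref{le-appro} for property (3) together with a direct verification of $(a,ba)$-feasibility for property (4); the inductive step runs the same kind of argument but with $\mathcal{A}_{[i-1]}$ playing the role that $\mathcal{A}(U)$ played in Lemma~\ref{le-appro}, with the hypothesis (3) providing just enough control over the ``shape'' of $\mathcal{A}_{[i-1]}$ to make this work.

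For the base case I would first check that $\theta_1=\lfloor 2^{\log^a(\min U)}\rfloor$ is reproducible for $\mathcal{A}(U)$, which is the computation in the example following Definition~\ref{def_ancestor}: Lemma~\ref{lemma_diff_easy} (with $d=4$) gives $\delta_0=\Omega(\min U)=2^{\Omega(\log^b m)}$, so $\theta_1=O(2^{\log^a\delta_0})$. Properties (1) and (2) are built into Definition~\ref{def_ancestor}, and property (3) is exactly Lemma~\ref{le-appro}. For property (4), mutual distinctness and the $\alpha,\alpha'$ bounds transfer from the $(a,b)$-feasibility of $\mathcal{A}(U^{(1)})$ (Lemma~\ref{lemma_star_U_is_feasile}) via the scale-$\theta_1$ approximation, while $Z$-membership of $\theta_1$ is controlled by $\log\theta_1=\log^a(\min U)=\Omega(\log^{ba}m)$ (using $\log b_1=\Omega(\log^b m)$) and $\log\theta_1=O(\log^a\alpha)$ (using $\alpha=\Theta(\max U)\ge\min U/2$).

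For the inductive step, assume the four properties hold for $\mathcal{A}_{[i-1]}$ and let $i\le\tau$. I would first verify reproducibility of $\theta_i$ for $\mathcal{A}_{[i-1]}$: by the inductive (3), the non-$\theta$ elements of $\mathcal{A}_{[i-1]}$ approximate $\mathcal{A}(U^{(i-1)})$ with scale $\theta_1$ and hence have order $\Omega(\min U)\gg\theta_1$, while the $\theta_j$'s strictly decrease in $j$ so $\theta_{i-1}$ is the smallest element and pairwise gaps among them are at least $\Theta(\theta_{i-1})$; therefore $\delta_{i-1}=\Theta(\theta_{i-1})$ and $\theta_i=\lfloor 2^{\log^a\theta_{i-1}}\rfloor=O(2^{\log^a\delta_{i-1}})$. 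Non-monotonicity of $\mathcal{A}(U^{(i-1)})$ for $i-1<\tau$ passes to $\mathcal{A}_{[i-1]}$ (which has at least as many elements per side by (2)), so any ancestor $\mathcal{A}_{[i]}$ is well-defined. Property (1) is by construction; property (2) holds because the ancestor removes only the maxima $\alpha,\alpha'$ of the two sides, which by (3) and (4) of the inductive hypothesis are approximations to $\max\mathcal{A}(U^{(i-1)})_\pm$ and thus much larger than any $\theta_j$. For property (3), mimicking the calculation of Lemma~\ref{le-appro}, the new non-$\theta$ element $|\alpha-\alpha'|\pm\theta_i$ approximates (with scale $\theta_1$) the unique new element $v_2-v_{k-i+1}$ (or its negative) introduced in $\mathcal{A}(U^{(i)})$ by deletion of $\max U^{(i-1)}$, while the other non-$\theta$ entries of $\mathcal{A}_{[i]}$ are carried over unchanged from $\mathcal{A}_{[i-1]}$.

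Property (4) is the main obstacle and requires the most careful accounting. Distinctness and the $\alpha,\alpha'$ conditions transfer directly from $(a,b)$-feasibility of $\mathcal{A}(U^{(i)})$ (Lemma~\ref{lemma_star_U_is_feasile}) via the scale-$\theta_1$ approximation. The $Z$-condition is the delicate part: iterating $\log\theta_j=\log^a\theta_{j-1}$ starting from $\log\theta_1=\log^a(\min U)$ yields $\log\theta_j=\Theta(\log^{a^j}(\min U))=\Omega(\log^{ba^j}m)\ge\Omega(\log^{ba^i}m)$ for every $j\le i$, which is exactly the $ba^i$ lower bound demanded by $(a,ba^i)$-feasibility; the matching upper bound $\log\theta_j=O(\log^a\alpha)$ is immediate from $\theta_j\le\theta_1\le 2^{\log^a(\min U)}$ together with $\min U=O(\alpha)$. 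The non-$\theta$ entries of $Z(\mathcal{A}_{[i]})$ inherit even stronger bounds from the $(a,b)$-feasibility of $\mathcal{A}(U^{(i)})$, up to the negligible logarithmic effect of the $\theta_1$ scale. The crux of the argument, and the main difficulty, is precisely tracking this controlled degradation $b\mapsto ba^i$: if the lower bound slipped faster, the invariant would fail at a later step and the link lemma could no longer be applied to pass from $\mathcal{A}_{[i]}$ to $\mathcal{A}_{[i+1]}$.
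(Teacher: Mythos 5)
Your proposal is correct and follows essentially the same route as the paper's own proof: induction on $i$, reproducibility of $\theta_i$ for $\mathcal A_{[i-1]}$, properties (1)--(3) from the ancestor construction and the commutativity/approximation argument of Lemma~\ref{le-appro}, and property (4) by transferring the feasibility of $\mathcal A(U^{(i)})$ through the scale-$\theta_1$ approximation while tracking $\log\theta_j=\Theta(\log^{a^j}(\min U))=\Omega(\log^{ba^i}m)$. The only cosmetic difference is in establishing $\delta(\mathcal A_{[i-1]})=\Theta(\theta_{i-1})$: the paper applies Lemma~\ref{lemma_diff_easy} to the extended plastic set $\{u_1,\ldots,u_k,\theta_1,\ldots,\theta_{i-1}\}$, whereas you argue directly from the decreasing chain of $\theta_j$'s and the $\Omega(\min U)$ lower bound on the non-$\theta$ entries.
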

Theorem~\ref{thm_induc_link_lemma} is our important structural theorem for a string of bipartite arrays generated by choosing ancestors iteratively. It reveals that for each $i$, the array $\mathcal A_{[i]}$ after deleting all small coefficients $\theta_1, \ldots, \theta_i$ is almost the invariant array $\mathcal A(U^{(i)})$, which has good feasibility by Lemma~\ref{lemma_star_U_is_feasile}. Hence we can ensure that all $\mathcal A_{[i]}$ are with good feasibility, and one can apply link lemma iteratively.

\subsection{Proof of Theorem~\ref{thm_induc_link_lemma}}
In this subsection, the notations $m$, $R$, $U$, $\mathcal A_{[i]}$ and $\theta_i$ are all from Theorem~\ref{thm_induc_link_lemma}. Since $U$ is non-monotonic, $\tau(U)>0$. We prove Theorem~\ref{thm_induc_link_lemma} by induction on $i\in [\tau]$.
 We begin with some simple but useful results.
\begin{lemma}\label{lemma_small_property}
Let $w=\sum_{i=1}^k\omega_iu_i$ be a linear combination of elements in $U$ with coefficients $\omega_1, \ldots, \omega_k\in\mathbb Z$. As long as $\sum_{i=1}^k|\omega_i|$ is nonzero and bounded by a constant, we have $\theta_1=o(|w|)$.

Moreover, all elements in $\{\theta_1, \ldots, \theta_\tau\}$ go to infinity with $m$, and the order between them is fixed: $\theta_1>\cdots>\theta_\tau$.
\end{lemma}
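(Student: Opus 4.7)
The plan is to prove the three assertions of Lemma~\ref{lemma_small_property} by direct application of Lemma~\ref{lemma_diff_easy}, Lemma~\ref{lemma_little_order}, and the $(a,b)$-plasticity of $R$, together with a short induction on the index $j$ of $\theta_j$.

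For the first assertion, I would observe that since $U$ is a $k$-permutation sequence of $R=\{b_1,\ldots,b_t\}$, every $u_i$ equals some $b_{\pi(i)}$, so the sum $w=\sum_{i=1}^k\omega_i u_i$ can be re-indexed as $w=\sum_{j=1}^t\omega'_j b_j$ with $\sum_j|\omega'_j|=\sum_i|\omega_i|$. Applying Lemma~\ref{lemma_diff_easy} with $d$ taken as the constant bound on $\sum|\omega_i|$ yields $|w|=\Theta(b_{p^*})$, where $p^*=\max\{j:\omega'_j\ne 0\}$. Because $b_{p^*}$ is the coefficient of a nonzero term, the corresponding $b_{p^*}$ lies in $U$ itself, so $b_{p^*}\ge\min U$, and hence $|w|=\Omega(\min U)$. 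On the other hand $\theta_1=\lfloor 2^{\log^a(\min U)}\rfloor$ with $a<1$, and Lemma~\ref{lemma_little_order} gives $\theta_1=(\min U)^{o(1)}=o(\min U)$. Combining these two estimates produces $\theta_1=o(|w|)$, which is the first claim.

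For the second assertion, I would run an induction on $j$. The base case follows from the $(a,b)$-plasticity of $R$: since $\log b_1=\Omega(\log^b m)$, we have $b_1\to\infty$ with $m$, and therefore $\min U\ge b_1\to\infty$, giving $\log^a(\min U)\to\infty$ (here $a>0$) and thus $\theta_1\to\infty$. For the inductive step, if $\theta_j\to\infty$, then $\log^a\theta_j\to\infty$, so $\theta_{j+1}=\lfloor 2^{\log^a\theta_j}\rfloor\to\infty$. Iterating this a fixed number of times (at most $\tau\le t-3$) covers all indices in $[\tau]$.

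For the third assertion, I would compare $\theta_j$ and $\theta_{j+1}$ directly. From $\theta_{j+1}\le 2^{\log^a\theta_j}$ and $\theta_j=2^{\log\theta_j}$, the inequality $\theta_{j+1}<\theta_j$ reduces to $\log^a\theta_j<\log\theta_j$, equivalently $\log^{1-a}\theta_j>1$, which holds whenever $\log\theta_j>1$. By the second assertion this is satisfied for all $j\in[\tau]$ provided $m$ is large enough, so the chain $\theta_1>\theta_2>\cdots>\theta_\tau$ is obtained.

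None of the steps presents a serious obstacle; the only point requiring a little care is ensuring, in the application of Lemma~\ref{lemma_diff_easy}, that the re-indexing from a linear combination over $U$ to one over $R$ preserves the hypothesis $\sum|\omega'_j|\le d$ and that the resulting $b_{p^*}$ lies in $U$ (and therefore dominates $\min U$). Once that is checked, the rest is a routine comparison of the form $2^{\log^a x}$ vs.~$x$ in the regime $x\to\infty$ with fixed exponent $a\in(0,1)$.
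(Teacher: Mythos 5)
Your proposal is correct and follows essentially the same route as the paper: reduce the combination over $U$ to a combination over $R$ and invoke Lemma~\ref{lemma_diff_easy} to get $|w|=\Omega(\min U)$, then use Lemma~\ref{lemma_little_order} to bound $\theta_1=o(\min U)$, and handle the monotone, divergent chain $\theta_1>\cdots>\theta_\tau$ by direct comparison of $2^{\log^a x}$ with $x$. The paper dispatches the last two assertions as "trivial from Lemma~\ref{lemma_little_order}," whereas you spell out the induction and the inequality $\log^{1-a}\theta_j>1$; that is the only (cosmetic) difference.
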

\begin{proof}
From $(a, b)$-plasticity of $R$ with respect to $m$, $\min U\ge \min R= 2^{\Omega(\log^b m)}$ goes to infinity as $m$ goes to infinity. For Lemma~\ref{lemma_little_order}, $\theta_1=o(\min U)$. From Lemma~\ref{lemma_diff_easy}, we get $|w|=\Omega(\min U)$. Hence $\theta_1= o(|w|)$. The second part is trivial from Lemma~\ref{lemma_little_order}. 
\end{proof}

\begin{corollary}\label{coro_order_fix}
If $v$ and $w$ are elements from array $\mathcal A(U^{(\ell)})$ or set $Z(\mathcal A(U^{(\ell)}))$ for some $\ell\in [0, \tau]$, and $\hat v$, $\hat w$ are integers approximate to $v$ and $w$ with a common scale $\theta_1$, respectively.
Then $v> w$ implies that $\hat{v}>\hat{w}$.
\end{corollary}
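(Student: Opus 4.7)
The plan is to reduce the statement to a size estimate: I want to show that $|v-w|$ is so much larger than the approximation scale $\theta_1$ that any perturbations of size $O(\theta_1)$ cannot reverse the order of $v$ and $w$.

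First, I would describe the structure of $v$ and $w$. Every element of $\mathcal A(U^{(\ell)})$ has the form $u_i-u_j$ for two distinct entries of the cyclic sequence $U^{(\ell)}\subseteq U$, and hence is a $\pm 1$-combination of at most two elements of $U\subset R$. Every element of $Z(\mathcal A(U^{(\ell)}))$ is either of that same shape or equals $|\alpha-\alpha'|$, which in turn is the absolute value of the difference of two such coefficients, and so is a $\pm 1$-combination of at most four elements of $U$. In particular both $v$ and $w$ (and hence $v-w$) are $\mathbb Z$-linear combinations of elements of $R$ whose absolute coefficient sums are bounded by a constant (say $d=8$) that is uniform in $\ell\in[0,\tau]$ and in the particular $v,w$ chosen.

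Next I would invoke Lemma~\ref{lemma_diff_easy} with this uniform $d$. Since $v>w$ the combination representing $v-w$ is nontrivial, so the lemma gives $|v-w|=\Theta(b_p)$ for some $p\in[t]$; in particular $|v-w|\ge\Omega(b_1)=2^{\Omega(\log^b m)}$. By the choice $\theta_1=\lfloor 2^{\log^a(\min U)}\rfloor$ together with Lemma~\ref{lemma_little_order}, we have $\theta_1=o(\min U)=o(|v-w|)$; alternatively this is exactly the content of Lemma~\ref{lemma_small_property} applied to the linear combination representing $v-w$.

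Finally, combining the approximation hypotheses $|\hat v-v|,|\hat w-w|=O(\theta_1)$ with the bound just obtained yields
\[
\hat v-\hat w = (v-w) + O(\theta_1) = (v-w)\bigl(1+o(1)\bigr),
\]
so $\hat v-\hat w$ is strictly positive whenever $v>w$ and $m$ is sufficiently large. The argument is essentially a one-line size comparison; the only subtlety, rather than a genuine obstacle, is checking that the coefficient-sum bound on $v-w$ as a combination of $b_i$'s can be taken uniformly over all $\ell\in[0,\tau]$ and all admissible $v,w$, which follows because $\tau\le t-3$ is fixed and every element involved is a $\pm 1$-combination of at most four entries of $U$.
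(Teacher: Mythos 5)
Your proposal is correct and follows essentially the same route as the paper: the paper also cites Lemma~\ref{lemma_small_property} to get $\theta_1=o(|v-w|)$ and then notes $\hat v-\hat w=(v-w)+O(\theta_1)$. You just spell out the (implicit in the paper) verification that $v-w$ is a bounded-coefficient $\mathbb Z$-combination of elements of $U$, which is the hypothesis of that lemma.
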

\begin{proof} 
By Lemma~\ref{lemma_small_property}, $\theta_1=o(|v-w|)$. Notice that $\hat{v}-\hat{w}=v-w+O(\theta_1)$, so the result is verified for that $m$ is large enough.
\end{proof}

For the base case $i=1$,  properties (1) and (3) have been proved in Lemma~\ref{le-appro}. Property (2) is trivial. Now we prove property (4), that is the following claim.

\begin{claim}\label{claim_a1}
$\mathcal A_{[1]}$  is $(a, ba)$-feasible with respect to $m$.
\end{claim}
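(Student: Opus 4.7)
The plan is to verify the three conditions of Definition~\ref{def:fea} for $\mathcal A_{[1]}$ at feasibility level $(a, ba)$ by transferring them from the corresponding conditions for $\mathcal A(U^{(1)})$ at level $(a, b)$. The key input is Lemma~\ref{le-appro}, which gives the $\theta_1$-scale approximation $\mathcal A_{[1]}\backslash\{\theta_1\} \overset{\theta_1}{\approx} \mathcal A(U^{(1)})$, together with Lemma~\ref{lemma_star_U_is_feasile} applied to the $(k-1)$-permutation sequence $U^{(1)}$ of $R$ (valid since Theorem~\ref{thm_induc_link_lemma} assumes $k > 3$, so $k-1 \geq 3$).

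Distinctness (condition (1)) is the easiest step: elements of $\mathcal A(U^{(1)})$ are pairwise distinct with gaps of order $\Omega(\min U)$ by Lemmas~\ref{lemma_diff_easy} and~\ref{lemma_small_property}, and adding $O(\theta_1) = o(\min U)$ perturbations preserves this distinctness; furthermore $\theta_1$ is strictly smaller than every other element by Remark~\ref{remark_of_link_lemma}(2), so it differs from all of them. For the top elements (condition (2)), I would apply Corollary~\ref{coro_order_fix} to conclude that the two maxes $\beta, \beta'$ of $\mathcal A_{[1]}$ are in correspondence with the two maxes $\alpha^{(1)}, (\alpha^{(1)})'$ of $\mathcal A(U^{(1)})$, differing by $O(\theta_1)$. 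The desired bounds $|\beta - \beta'| = o(\beta)$ and $\log \beta = O(\log^a m)$ then transfer from the analogous bounds for $\mathcal A(U^{(1)})$, using that $\log \theta_1 = O(\log^a m)$ and $\theta_1 = o(\alpha^{(1)})$.

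The main work is condition (3), describing $Z(\mathcal A_{[1]})$. Its elements split into three kinds: (i) the singleton $\theta_1$; (ii) elements approximating those of $\mathcal A(U^{(1)})$ other than its two maxes; and (iii) the difference $|\beta - \beta'|$, which approximates $|\alpha^{(1)} - (\alpha^{(1)})'|$. For types (ii) and (iii) the $(a,b)$-feasibility of $\mathcal A(U^{(1)})$ immediately yields $\log w = \Omega(\log^b m)$, which in turn gives $\log w = \Omega(\log^{ba} m)$ (since $ba < b$ and $\log m$ is large), and $\log w = O(\log^a \alpha^{(1)}) = O(\log^a \beta)$. For $\theta_1$ itself, from the definition $\theta_1 = \lfloor 2^{\log^a(\min U)} \rfloor$, the lower bound $\log \theta_1 \geq \log^a(b_1) = \Omega(\log^{ab} m) = \Omega(\log^{ba} m)$ uses the $(a,b)$-plasticity of $R$, and the upper bound $\log \theta_1 \leq \log^a(\max U^{(1)}) = O(\log^a \beta)$ uses $\min U \leq \max U^{(1)}$, which holds since $|U^{(1)}| \geq 2$.

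The subtle part to watch in the write-up is precisely this lower bound on $\log \theta_1$: it is what forces the feasibility parameter to drop from $b$ to $ba$, and it also explains why an inductive version (as in Theorem~\ref{thm_induc_link_lemma}(4)) will keep degrading the second parameter by a factor of $a$ at each step. Once this bookkeeping is nailed down, all other checks are direct consequences of Lemma~\ref{le-appro} and Corollary~\ref{coro_order_fix}.
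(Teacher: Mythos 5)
Your proposal follows the same strategy as the paper's proof: invoke Lemma~\ref{lemma_star_U_is_feasile} for the $(a,b)$-feasibility of $\mathcal A(U^{(1)})$, use Lemma~\ref{le-appro} to obtain the $\theta_1$-scale approximation $\mathcal A_{[1]}\backslash\{\theta_1\}\overset{\theta_1}\approx\mathcal A(U^{(1)})$, transfer conditions (1)--(3) of Definition~\ref{def:fea} via Corollary~\ref{coro_order_fix} and Lemma~\ref{lemma_small_property}, and treat $\theta_1$ separately to see where the exponent drops from $b$ to $ba$. The bookkeeping details (e.g.\ your $\log\theta_1\le\log^a(\min U)\le\log^a(\max U^{(1)})=O(\log^a\beta)$) match what the paper compresses into ``trivial to check,'' so your write-up is correct and essentially the same argument.
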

\begin{proof}
 By Lemma~\ref{lemma_star_U_is_feasile}, $\mathcal A (U^{(1)})$ is $(a,b)$-feasible with respect to $m$, and hence
  \begin{itemize}
    \item[(i)] elements in $\mathcal A (U^{(1)})$ are mutually unequal;
    \item[(ii)] the largest elements in both sides $\alpha\in \mathcal A(U^{(1)})_+$ and $\alpha'\in \mathcal A(U^{(1)})_-$ satisfy $|\alpha'- \alpha|= o(\alpha)$ and $\log \alpha= O(\log^a m)$;
    \item[(iii)] if we define set $Z_1=Z(\mathcal A(U^{(1)}))$, i.e., $Z_1=\mathcal A(U^{(1)})\cup\{|\alpha- \alpha'|\}\backslash\{\alpha, \alpha'\}$, for any $w\in Z_1$, $\Omega(\log^b m)=\log w=O(\log^a \alpha)$.
  \end{itemize}

   By Lemma~\ref{le-appro}, $\mathcal A_{[1]}\backslash\{\theta_1\}\overset{\theta_1}\approx \mathcal A (U^{(1)})$. Then by Corollary~\ref{coro_order_fix} and (i), elements in $\mathcal A_{[1]}\backslash\{\theta_1\}$ are mutually unequal and possess the same ordering as in $\mathcal A(U^{(1)})$.  By Lemma~\ref{lemma_small_property}, $\theta_1$ is smaller than any one in $\mathcal A_{[1]}\backslash\{\theta_1\}$, so elements in $\mathcal A_{[1]}$ are mutually unequal.

   Denote the corresponding coefficients of $\alpha$ and $\alpha'$ under the approximation relationship in $\mathcal A_{[1]}$ as $\hat{\alpha}$ and $\hat{\alpha}'$, respectively. From Corollary~\ref{coro_order_fix}, they are also the maximum elements in corresonding parts. That is, $\hat{\alpha}=\max \mathcal A_{[1]+}$, $\hat{\alpha}'=\max \mathcal A_{[1]-}$; and $\hat{\alpha}= \alpha\pm O(\theta_1)$, $\hat{\alpha}'= \alpha'\pm O(\theta_1)$. So $|\hat{\alpha}'-\hat{\alpha}|= |\alpha' -\alpha| \pm O(\theta_1)=o(\alpha)=o(\hat{\alpha})$, and $\log\hat{\alpha}=\Theta(\log \alpha)=O(\log^a m)$.

Denote sets $Z= Z(\mathcal A_{[1]})= \mathcal A_{[1]}\cup\{|\hat{\alpha}-\hat{\alpha}'|\}\backslash \{\hat{\alpha}, \hat{\alpha}'\}$ and $Z_2= Z\setminus \{\theta_1\}= (\mathcal A_{[1]}\backslash\{\theta_1\})\cup\{|\hat{\alpha}-\hat{\alpha}'|\}\backslash \{\hat{\alpha},\hat{\alpha}'\}$.  It is easy to see that for any element $\hat w\in Z_2$, there always exists a corresponding element $w\in Z_1$ such that $\hat w\approx w$ with scale $\theta_1$. From Lemma~\ref{lemma_small_property} $\hat w= w\pm o(w)$, and from (iii) $\Omega(\log^b m)=\log w=O(\log^a \alpha)=O(\log^a \hat\alpha)$. So $\Omega(\log^{b} m)=\log \hat w=O(\log^a \hat\alpha)$. Finally, for the definition of $\theta_1$, it is trivial to check $\Omega(\log^{ba} m)=\log \theta_1 =O(\log^a \hat\alpha)$. Hence, $\mathcal A_{[1]}$ is $(a, ba)$-feasible.
\end{proof}


For  $i\ge 2$, all properties can be proved similarly. Suppose we have proved Theorem~\ref{thm_induc_link_lemma} with properties (1)-(4) for any $i\in [\ell]$ for some $1\leq \ell <\tau$. So we have
\begin{itemize}
  \item[(a)]$\theta_1, \ldots, \theta_{\ell}$ are all contained in array $\mathcal A_{[\ell]}$ as coefficients;
  \item[(b)]$\mathcal A_{[\ell]}\backslash\{\theta_1, \ldots, \theta_\ell\}\overset{\theta_1}\approx \mathcal A (U^{(\ell)})$;
  \item[(c)]$\mathcal A_{[\ell]}$ is $(a, ba^{\ell})$-feasible with respect to $m$.
\end{itemize}

Let us consider the case $i={\ell+1}$, and prove that properties (1)-(4) hold for ${\ell+1}$.

{
As an extension of $U$, $\{u_1, \ldots, u_k, \theta_1, \ldots, \theta_\ell\}$ is a new $(a, ba^\ell)$-plastic set with respect to $m$ with size $k+\ell<2k$. By applying Lemma~\ref{lemma_diff_easy} on this plastic set and that elements in array $\mathcal A_{[\ell]}$ are mutually unequal from condition (c), $\min \{\mathcal A_{[\ell]}\cup \{|v-w|: v\ne w\in \mathcal A_{[\ell]}\}\}= \Omega(\theta_\ell)$.
Since $\theta_\ell=\exp({\Omega(\log^{ba^\ell} m)})$ goes to infinity as $m$ goes to infinity, $\theta_{\ell+1}=\lfloor2^{\log^a \theta_\ell}\rfloor$ is reproducible for $\mathcal A_{[\ell]}$. Moreover, since $\ell <\tau$, $\mathcal A(U^{(\ell)})$ is not monotonic. From  (b) and Corollary~\ref{coro_order_fix}, $\mathcal A_{[\ell]}$ is also not monotonic. Together with (c), the feasibility of $\mathcal A_{[\ell]}$, both types of ancestors of $\mathcal A_{[\ell]}$ by $\theta_{\ell+1}$  exist. Hence property (1) is proved. Let $\mathcal A_{[\ell+1]}$ be any one of the ancestors. 
}

For property (2),  by (b) and Lemma~\ref{lemma_small_property}, neither of the largest elements of $\mathcal A_{[\ell]+}$ and $\mathcal A_{[\ell]-}$ is contained in the set $\{\theta_j: j\in [\ell]\}$. By Definition~\ref{def_ancestor}, for any $j\in [\ell]$, $\theta_j\in \mathcal A_{[\ell]+}$ implies  $\theta_j\in \mathcal A_{[\ell+1]+}$, while $\theta_j\in \mathcal A_{[\ell]-}$ implies  $\theta_j\in \mathcal A_{[\ell+1]-}$. By Remark~\ref{remark_of_link_lemma}, $\theta_{\ell+1}$ is a coefficient of $\mathcal A_{[\ell+1]}$. So property (2) holds for $\mathcal A_{[\ell+1]}$.

For property (3), again the fact that the set $\{\theta_j: j\in [\ell]\}$ is free from the largest elements in both sides leads to the commutative property of generating an ancestor and deleting $\{\theta_j: j\in [\ell]\}$ from $\mathcal A_{[\ell]}$. That is to say, no matter to generate which type of ancestors, say the $j$th type for any $j\in\{1, 2\}$, we have $(\mathcal A_{[\ell]})_j\backslash\{\theta_1, \ldots, \theta_\ell\}=(\mathcal A_{[\ell]}\backslash\{\theta_1, \ldots, \theta_\ell\})_j$. As a consequence, by Eq. (\ref{eq-scal}) in Lemma~\ref{le-appro}, for any $j\in\{1, 2\}$,
$$\mathcal A(U^{(\ell+1)})\approx (\mathcal A(U^{(\ell)}))_j\setminus\{\theta_{\ell+1}\}\overset{(b)}{\approx} (\mathcal A_{[\ell]}\backslash\{\theta_1, \ldots, \theta_\ell\})_j\backslash\{\theta_{\ell+1}\}=(\mathcal A_{[\ell]})_j\backslash\{\theta_1, \ldots, \theta_{\ell+1}\}.$$
The first approximation is with scale $\theta_{\ell+1}$, and naturally with $\theta_{1}$ since $\theta_{\ell+1}=o(\theta_1)$.
This means, for any $j\in\{1, 2\}$ with $\mathcal A_{[\ell+1]}=(\mathcal A_{[\ell]})_j$, $\mathcal A_{[\ell+1]}\backslash\{\theta_1, \ldots, \theta_{\ell+1}\}\approx \mathcal A(U^{(\ell+1)})$, and property (3) holds for $\mathcal A_{[\ell+1]}$.

Finally, property (4) holds for $\ell+1$  from Claim~\ref{claim_feasibility}.

\begin{claim}\label{claim_feasibility}
$\mathcal A_{[\ell+1]}$  is $(a, ba^{\ell+1})$-feasible with respect to $m$.
\end{claim}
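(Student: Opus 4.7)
The plan is to mirror the proof of Claim~\ref{claim_a1} line by line, with $\theta_{\ell+1}$ now playing the role that $\theta_1$ played there. First I would invoke Lemma~\ref{lemma_star_U_is_feasile} on the permutation sequence $U^{(\ell+1)}$ of $R$: this sequence has length $k-\ell-1\ge 3$ since $\ell+1\le\tau(U)\le k-3$, so $\mathcal A(U^{(\ell+1)})$ is $(a,b)$-feasible with respect to $m$. Combined with property~(3) already verified for index $\ell+1$, namely
\[
\mathcal A_{[\ell+1]}\setminus\{\theta_1,\ldots,\theta_{\ell+1}\}\overset{\theta_1}\approx\mathcal A(U^{(\ell+1)}),
\]
this reduces the verification of each feasibility condition of Definition~\ref{def:fea} to transferring it across an approximation of scale $\theta_1$.

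For distinctness of entries of $\mathcal A_{[\ell+1]}$, Corollary~\ref{coro_order_fix} preserves strict inequalities among the non-$\theta$ entries, while Lemma~\ref{lemma_small_property} gives the strict chain $\theta_1>\theta_2>\cdots>\theta_{\ell+1}$ and shows every $\theta_j$ is smaller than any non-$\theta$ entry. For the maxima $\hat\alpha=\max\mathcal A_{[\ell+1]+}$ and $\hat\alpha'=\max\mathcal A_{[\ell+1]-}$, Corollary~\ref{coro_order_fix} together with Lemma~\ref{lemma_small_property} forces them to be the $\theta_1$-approximants of the corresponding maxima $\alpha,\alpha'$ of $\mathcal A(U^{(\ell+1)})$ (neither of which is any $\theta_j$); hence
\[
|\hat\alpha-\hat\alpha'|=|\alpha-\alpha'|+O(\theta_1)=o(\alpha)=o(\hat\alpha),\qquad \log\hat\alpha=\Theta(\log\alpha)=O(\log^a m).
\]

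The remaining condition on $Z(\mathcal A_{[\ell+1]})$ splits into two kinds of entries. A non-$\theta$ entry $\hat w\in Z(\mathcal A_{[\ell+1]})$ approximates some $w\in Z(\mathcal A(U^{(\ell+1)}))$ with scale $\theta_1=o(w)$ (Lemma~\ref{lemma_small_property} applied to the bounded-constant-coefficient linear combination defining $w$), so $\log\hat w=\Theta(\log w)$ and the bounds $\Omega(\log^b m)\le\log w\le O(\log^a\alpha)$ supplied by $(a,b)$-feasibility of $\mathcal A(U^{(\ell+1)})$ transfer directly; since $b\ge ba^{\ell+1}$ the required lower bound on $\log\hat w$ holds. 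For each $\theta_j$ with $j\in[\ell+1]$, the recursion $\log\theta_{j+1}=\Theta(\log^a\theta_j)$ together with $\log\theta_1=\Theta(\log^a\min U)=\Omega(\log^{ab}m)$ (using $\min U\ge b_1$ and the $(a,b)$-plasticity of $R$) yields by a short induction $\log\theta_j=\Omega(\log^{ba^j}m)$, so in particular $\log\theta_j\ge\log\theta_{\ell+1}=\Omega(\log^{ba^{\ell+1}}m)$. The matching upper bound $\log\theta_j\le\log\theta_1=O(\log^a\min U)\le O(\log^a\max U^{(\ell+1)})=O(\log^a\alpha)=O(\log^a\hat\alpha)$ is immediate from $\min U\le\max U^{(\ell+1)}$. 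I do not expect a substantive obstacle here; the only care is in bookkeeping the exponent degradation from $b$ at level $0$ down to $ba^{\ell+1}$ at level $\ell+1$, which is governed precisely by how small $\theta_{\ell+1}$ becomes after $\ell+1$ iterations and is exactly the parameter appearing in the statement.
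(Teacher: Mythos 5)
Your proposal is correct and follows essentially the same route as the paper's proof: invoke Lemma~\ref{lemma_star_U_is_feasile} on $U^{(\ell+1)}$, transfer distinctness and the maxima through the scale-$\theta_1$ approximation via Corollary~\ref{coro_order_fix} and Lemma~\ref{lemma_small_property}, and verify the $Z$-set bounds separately for the approximated entries and for the $\theta_j$'s. The only difference is cosmetic --- you spell out the ``trivial to check'' bound $\Omega(\log^{ba^{\ell+1}}m)=\log\theta_j=O(\log^a\hat\alpha)$ via the recursion on $\log\theta_j$, which the paper omits, and your opening remark that $\theta_{\ell+1}$ ``plays the role of $\theta_1$'' is slightly loose (the approximation scale stays $\theta_1$; it is the whole set $\{\theta_1,\ldots,\theta_{\ell+1}\}$ that replaces $\{\theta_1\}$), but your detailed argument handles this correctly.
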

\begin{proof} The proof is similar to that of Claim~\ref{claim_a1}.
 By Lemma~\ref{lemma_star_U_is_feasile}, $\mathcal A (U^{(\ell+1)})$ is $(a,b)$-feasible with respect to $m$,  so
  \begin{itemize}
    \item[(i')] elements in $\mathcal A (U^{(\ell+1)})$ are mutually unequal;
    \item[(ii')] the largest elements $\alpha\in \mathcal A(U^{(\ell+1)})_+$ and $\alpha' \in \mathcal A(U^{(\ell+1)})_-$ satisfy $|\alpha'- \alpha|=o(\alpha)$ and $\log \alpha= O(\log^a m)$;
    \item[(iii')] define $Z_1=Z(\mathcal A(U^{(\ell+1)}))$, i.e., $\mathcal A(U^{(\ell+1)})\cup\{|\alpha- \alpha'|\}\backslash\{\alpha, \alpha'\}$. Then for any $w\in Z_1$, $\Omega(\log^b m) = \log w =O(\log^a \alpha)$.
  \end{itemize}

   By  property (3) for $\ell+1$, $\mathcal A_{[\ell+1]}\backslash\{\theta_1, \ldots, \theta_{\ell+1}\}\overset{\theta_1}\approx \mathcal A (U^{(\ell+1)})$. Then by Corollary~\ref{coro_order_fix} and (i'), elements in $\mathcal A_{[\ell+1]}\backslash\{\theta_1, \ldots, \theta_{\ell+1}\}$ are mutually unequal.  On the other hand, $\theta_j, j\in [\ell+1]$ are also mutually unequal. From Lemma~\ref{lemma_small_property}, as $m$ is large enough, the largest element in the latter part, $\theta_1$, is smaller than the smallest element in the former part. So the elements in the union of them two, i.e., $\mathcal A_{[\ell+1]}$, are also mutually unequal. 

   Denote the corresponding coefficients of $\alpha$ and $\alpha'$ in $\mathcal A_{[\ell+1]}$ by the proven property (3) for $\ell+1$ as $\hat{\alpha}$ and $\hat{\alpha}'$, respectively. Then $\hat{\alpha}\in \mathcal A_{[\ell+1]+}$, $\hat{\alpha}'\in \mathcal A_{[\ell+1]-}$ are two largest elements in the corresponding part of $\mathcal A_{[\ell+1]}$, and $\hat{\alpha}= \alpha\pm O(\theta_1)$, $\hat{\alpha}'= \alpha'\pm O(\theta_1)$. Hence $|\hat{\alpha}'-\hat{\alpha}|= |\alpha' -\alpha| \pm O(\theta_1)=o(\alpha)=o(\hat{\alpha})$, and $\log\hat{\alpha}=\Theta(\log \alpha)=O(\log^a m)$.

Denote sets $Z= Z(\mathcal A_{[\ell+1]})= \mathcal A_{[\ell+1]}\cup\{|\hat{\alpha}-\hat{\alpha}'|\}\backslash \{\hat{\alpha}, \hat{\alpha}'\}$ and $Z_2= (\mathcal A_{[\ell+1]}\backslash\{\theta_1, \ldots, \theta_{\ell+1}\})\cup\{|\hat{\alpha}-\hat{\alpha}'|\}\backslash \{\hat{\alpha}, \hat{\alpha}'\}$. From the proven property (3) for $\ell+1$, for any element $\hat w\in Z_2$, there always exists a corresponding element $w\in Z_1$ such that $\hat w \approx w$ with scale $\theta_1$, and hence $\hat w=w\pm o(w)$.
From the $(a, b)$-feasibility of $\mathcal A(U^{(\ell+1)})$, $\Omega(\log^b m)=\log w= O(\log^a \alpha)=O(\log^a\hat\alpha)$. From $\hat w=w\pm o(w)$, $\Omega(\log^b m)=\log \hat w= O(\log^a\hat\alpha)$. For the left elements $\theta_j, j\in [\ell+1]$, it is trivial to check that $\Omega(\log^{ba^{\ell+1}} m)=\log \theta_j= O(\log^a\hat\alpha)$. Hence $\mathcal A_{[\ell+1]}$ is $(a, ba^{\ell+1})$-feasible.
\end{proof}

This completes the proof of  Theorem~\ref{thm_induc_link_lemma}.

\begin{remark}\label{remark_of_string_lemma}
From the proof of  Theorem~\ref{thm_induc_link_lemma}, the property (2)
 can be further improved as follows: As long as $\theta_i$ is set in one part of $\mathcal A_{[i]}$ ($\mathcal A_{[i]+}$ or $\mathcal A_{[i]-}$), for any $j\ge i$, $\theta_i$ is also located in the same part of $\mathcal A_{[j]}$.
\end{remark}

\section{From Monotonic Arrays to any $\mathcal A(U)$}\label{sec_sprine_lemma}

This section devotes to prove Theorem~\ref{thm_trans_main_thm} by using the link lemma. Let us first recall the theorem.
\begin{thmbis}{thm_trans_main_thm}
For a fixed integer $t$ and reals $a,b$ satisfying $0<b\le a^t<1$, when $m$ is large enough,  let $R=\{b_1, \ldots, b_t\}$ be an $(a, b)$-plastic subset with respect to $m$.
Then for any $k$-permutation sequence $U$ of $R$ for some $k\in[3, t]$, there exists a constant $c=c(\tau(U), a)\in (0, 1)$ such that an $\mathcal L(U)$-free set $M\subset [m]$ exists with $|M|=\frac m {2^{O(\log^{c} m)}}$.
\end{thmbis}

As we have mentioned in Section~\ref{sec_notations}, we rephrase this theorem in the language of feasibility and goodness. For any $(a, b)$-plastic set $R(m)$ of some large enough $m$, we only need to show that for any $k$-permutation sequence $U$ of $R$ for some $3\le k\le t$, the corresponding bipartite $(a, b)$-feasible array $\mathcal A(U)$ with respect to $m$ is $c$-good under location $m$.

Let $\mathcal{C}$ be the set of all monotonic invariant arrays with respect to $m$ such that all coefficients are in $[2^{O(\log^a m)}]$, where $a$ is from Theorem~\ref{thm_trans_main_thm}. Then by Lemma~\ref{corollary_one_side_along}, all arrays in $\mathcal{C}$ are $c_0$-good under location $m$ with $c_0=\frac{1+a}2\in(0, 1)$. Let $R(m)$ be an $(a, b)$-plastic set of size $t$, and let $U$ be a $k$-permutation sequence of $R$ for some $k\in[3, t]$. In Algorithm~\ref{alg_1}, we show a way to generate from $\mathcal A(U)$ a string of ancestors  $\mathcal A_{[0]}, \mathcal A_{[1]}, \ldots, \mathcal A_{[\tau]}$, which ends with an array in $\mathcal{C}$, see Lemma~\ref{string_lemma_3}. Since each array in $\mathcal{C}$ is $c_0$-good, and each $\mathcal A_{[i]}$ is  $(a, ba^i)$-feasible with respect to $m$ from Theorem~\ref{thm_induc_link_lemma},  we can apply the link lemma to each ``link'' through the string to deduce the goodness of $\mathcal A_{[0]}=\mathcal A(U)$. 


\begin{algorithm}
\begin{algorithmic}
\STATE {\bf Input:} a permutation sequence $U$
\STATE compute $\tau(U)$ and $\chi(U)$
\STATE set $\mathcal A_{[0]}=\mathcal A(U)$; set $\theta_1=\lfloor2^{\log^a(\min U)}\rfloor$
\FORALL {$i\in [\tau]$}
\IF {$\chi(U)_i =0$}
\STATE set $\mathcal A_{[i]}$ as the first type ancestor of $\mathcal A_{[i-1]}$ with $\theta=\theta_i$
\ELSE
\STATE set $\mathcal A_{[i]}$ as the second type ancestor of $\mathcal A_{[i-1]}$ with $\theta=\theta_i$
\ENDIF
\STATE set $\theta_{i+1}=\lfloor2^{\log^a \theta_i}\rfloor$
\ENDFOR
\end{algorithmic}
\caption{Finding a string from array $\mathcal A(U)$ to $\mathcal{C}$}
\label{alg_1}
\end{algorithm}

\begin{lemma}\label{string_lemma_3}
In Algorithm~\ref{alg_1}, $\mathcal A_{[\tau]}$ is always in $\mathcal{C}$.
\end{lemma}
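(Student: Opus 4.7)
The plan is to combine Theorem~\ref{thm_induc_link_lemma} with a direct analysis of the rule in Algorithm~\ref{alg_1}. First, if $\tau(U)=0$ then $U$ is already monotonic, $\mathcal A_{[\tau]}=\mathcal A(U)$ is of type $(k-1,1)$ or $(1,k-1)$, and all its coefficients are bounded by $b_t=2^{O(\log^a m)}$; so the conclusion is immediate. From now on assume $\tau\geq 1$.

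By Theorem~\ref{thm_induc_link_lemma}(3)-(4), $\mathcal A_{[\tau]}\setminus\{\theta_1,\ldots,\theta_\tau\}\overset{\theta_1}{\approx}\mathcal A(U^{(\tau)})$ and $\mathcal A_{[\tau]}$ is $(a,ba^\tau)$-feasible. Since $U^{(\tau)}$ is monotonic, $\mathcal A(U^{(\tau)})$ has exactly one coefficient on the ``minority'' side, so the portion of $\mathcal A_{[\tau]}$ obtained by removing the $\theta_i$'s already has the shape $(k-\tau-1,1)$ (when $\epsilon(U)=1$) or $(1,k-\tau-1)$ (when $\epsilon(U)=0$), by Corollary~\ref{coro_order_fix}. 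It therefore suffices to show that every $\theta_i$ sits on the majority side of $\mathcal A_{[\tau]}$, and by Remark~\ref{remark_of_string_lemma} it is enough to check this at the moment $\theta_i$ is introduced in step $i$ of Algorithm~\ref{alg_1}.

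The core of the argument is a short case analysis at each step $i$. Let $u_L$ and $u_R$ denote the cyclic left/right neighbors of $\max U^{(i-1)}$. By Theorem~\ref{thm_induc_link_lemma}(3) and Corollary~\ref{coro_order_fix}, the two maxima $\hat\alpha\in\mathcal A_{[i-1]+}$ and $\hat\alpha'\in\mathcal A_{[i-1]-}$ approximate $\max U^{(i-1)}-u_L$ and $\max U^{(i-1)}-u_R$, so $\hat\alpha>\hat\alpha'$ iff $u_R>u_L$. Reading Definition~\ref{def_ancestor} in this light: when $\hat\alpha>\hat\alpha'$ the first-type ancestor inserts $\theta_i$ into $\mathcal A_{[i-1]+}$ and the second-type inserts it into $\mathcal A_{[i-1]-}$; when $\hat\alpha<\hat\alpha'$ the roles swap. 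Combined with the rule in Algorithm~\ref{alg_1} and the definition of $\chi(U)_i$, a four-subcase check on $(\epsilon(U),\mathrm{sgn}(u_R-u_L))$ shows that $\theta_i$ always lands on the positive side when $\epsilon(U)=1$ and on the negative side when $\epsilon(U)=0$, i.e., always on the majority side.

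Finally the coefficient bound is free: by Theorem~\ref{thm_induc_link_lemma}(4) and Definition~\ref{def:fea}(2), the largest coefficient of the $(a,ba^\tau)$-feasible array $\mathcal A_{[\tau]}$ is $2^{O(\log^a m)}$, and hence so is every coefficient. Together with the monotonicity just established, this puts $\mathcal A_{[\tau]}$ into $\mathcal C$. The main obstacle I anticipate is the bookkeeping in the four-subcase verification, where one must simultaneously track which of $\mathcal A_{[i-1]\pm}$ has the bigger max, which side receives $\theta_i$ under each of the two ancestor types, and which ancestor type $\chi(U)_i$ selects via Algorithm~\ref{alg_1}; lining up the signs consistently with Definition~\ref{def_ancestor} and the definition of $\chi(U)$ is where most of the care is needed.
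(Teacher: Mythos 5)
Your proposal is correct and follows essentially the same route as the paper's proof: reduce to showing that each $\theta_i$ is inserted on the majority side at the moment it first appears (via Remark~\ref{remark_of_string_lemma}), identify which side $\theta_i$ lands on by comparing $\hat\alpha$ with $\hat\alpha'$ through Theorem~\ref{thm_induc_link_lemma}(3) and Corollary~\ref{coro_order_fix}, and then match that against the ancestor type selected by $\chi(U)_i$ in Algorithm~\ref{alg_1}. Your four-subcase bookkeeping in terms of $(\epsilon(U),\mathrm{sgn}(u_R-u_L))$ is just a reorganization of the paper's split into the increasing/decreasing cases followed by $\chi(U)_i\in\{0,1\}$, and it checks out: in all four configurations $\theta_i$ ends up on the positive side when $\epsilon=1$ and on the negative side when $\epsilon=0$. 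One small improvement in your write-up is that you explicitly verify the coefficient bound $[2^{O(\log^a m)}]$ required for membership in $\mathcal C$ via Theorem~\ref{thm_induc_link_lemma}(4) and Definition~\ref{def:fea}(2); the paper only proves monotonicity inside Lemma~\ref{string_lemma_3} and handles the coefficient bound later in the proof of Theorem~\ref{thm_trans_main_thm}. A very minor quibble: the claim that $\mathcal A_{[\tau]}\setminus\{\theta_1,\ldots,\theta_\tau\}$ already has the monotonic shape follows directly from the definition of the approximation relation for arrays (which requires matching types), not really from Corollary~\ref{coro_order_fix}; the corollary is what you need for the $\hat\alpha$ vs.\ $\hat\alpha'$ comparison, not for the type.
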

\begin{proof}
Let us first recall the notations in Section~\ref{subsec_cyc_diff_and_ter_num}. Denote $U^{(i)}=(u_1^{(i)}, u_2^{(i)}, \ldots, u_{k-i}^{(i)})$ for any $i\in [\tau]$, and denote $\max(i)$ as the subindex of the maximum element in $U^{(i)}$ under $\mathbb Z_{k-i}$.

Consider the case when $U^{(\tau)}$ is increasing. By the definition, $\mathcal L(U^{(\tau)})$ is monotonic and $\mathcal A(U^{(\tau)})$ is of type $(k-\tau-1, 1)$. To show that $\mathcal A_{[\tau]}$ is  in $\mathcal{C}$, it suffices to show that the newly added coefficient $\theta_i$ is added to $\mathcal A_{[i]+}$ in each step from $\mathcal A_{[i-1]}$ to $\mathcal A_{[i]}$. Notice that the largest two elements in $\mathcal A(U^{(i-1)})_+$ and $\mathcal A(U^{(i-1)})_-$ are $\alpha= u_{\max(i-1)}^{(i-1)}- u_{\max(i-1)-1}^{(i-1)}$ and $\alpha'= u_{\max(i-1)}^{(i-1)}- u_{\max(i-1)+1}^{(i-1)}$, respectively. Denote the corresponding elements of $\alpha$ and $\alpha'$ in $\mathcal A_{[i-1]}$ as $\hat{\alpha}$ and $\hat{\alpha}'$, respectively. From Corollary~\ref{coro_order_fix} and Theorem~\ref{thm_induc_link_lemma} (3), $\hat{\alpha}$ is the largest in $\mathcal A_{[i-1]+}$, and $\hat{\alpha}'$ is the largest in $\mathcal A_{[i-1]-}$. Moreover, the order between $\hat\alpha$ and $\hat\alpha'$ is the same as $\alpha$ and $\alpha'$.

If $\chi(U)_i= 0$, by the definition of the character, $u_{\max(i-1)-1}^{(i-1)}< u_{\max(i-1)+1}^{(i-1)}$, and hence $\hat{\alpha}'-\hat{\alpha}<0$. Then Algorithm~\ref{alg_1} will choose $\mathcal A_{[i]}$ as the first type ancestor of $\mathcal A_{[i-1]}$, which puts $\theta_i$ in $\mathcal A_{[i]+}$ exactly in this case. If $\chi(U)_i= 1$, then $u_{\max(i-1)-1}^{(i-1)}> u_{\max(i-1)+1}^{(i-1)}$, and hence $\hat{\alpha}'-\hat{\alpha}>0$. Then Algorithm~\ref{alg_1} will choose $\mathcal A_{[i]}$ as the second type ancestor of $\mathcal A_{[i-1]}$, which also puts $\theta_i$ in $\mathcal A_{[i]+}$.

The above analysis shows that $\mathcal A_{[\tau]}\backslash\{\theta_1, \ldots, \theta_{\tau}\}$ is of $(k-\tau-1, 1)$ type, while $\theta_1, \ldots, \theta_{\tau}$ are all located in $\mathcal A_{[i]+}$ (see Remark~\ref{remark_of_string_lemma}). Thus, $\mathcal A_{[\tau]}$ is of $(k-1, 1)$ type, which is monotonic.

Consider the case when $U^{(\tau)}$ is decreasing. The argument is quite similar. The equation $\mathcal A(U^{(\tau)})$ is of type $(1, k-\tau-1)$, and we need to ensure that $\theta_i$ is added to $\mathcal A_{[i]-}$ in each step from $\mathcal A_{[i-1]}$ to $\mathcal A_{[i]}$. The expressions of two largest elements $\alpha$ and $\alpha'$ in $\mathcal A(U^{(i-1)})_+$ and $\mathcal A(U^{(i-1)})_-$, and the corresponding elements $\hat{\alpha}$ and $\hat{\alpha}'$  in $\mathcal A_{[i-1]}$ are similarly obtained. For the same reason, $\hat{\alpha}$ is the largest in $\mathcal A_{[i-1]+}$, and $\hat{\alpha}'$ is the largest in $\mathcal A_{[i-1]-}$.

If $\chi(U)_i= 0$, then $u_{\max(i-1)-1}^{(i-1)}> u_{\max(i-1)+1}^{(i-1)}$, and hence $\hat{\alpha}'-\hat{\alpha}>0$. Then Algorithm~\ref{alg_1} will choose $\mathcal A_{[i]}$ as the first type ancestor of $\mathcal A_{[i-1]}$, which puts $\theta_i$ in $\mathcal A_{[i]-}$. If $\chi(U)_i= 1$, then $u_{\max(i-1)-1}^{(i-1)}< u_{\max(i-1)+1}^{(i-1)}$, and hence $\hat{\alpha}'-\hat{\alpha}<0$. Then Algorithm~\ref{alg_1} will choose $\mathcal A_{[i]}$ as the second type ancestor of $\mathcal A_{[i-1]}$, which also puts $\theta_i$ in $\mathcal A_{[i]-}$.

The above analysis shows that $\mathcal A_{[\tau]}\backslash\{\theta_1, \ldots, \theta_{\tau}\}$ is of $(1, k-\tau-1)$ type, while $\theta_1, \ldots, \theta_{\tau}$ are all located in $\mathcal A_{[i]-}$. Thus, $\mathcal A_{[\tau]}$ is of $(1, k-1)$ type, which is monotonic.
\end{proof}

Now we prove Theorem~\ref{thm_trans_main_thm}.

\begin{proof}[Proof of Theorem~\ref{thm_trans_main_thm}]
If $\tau(U)=0$, things are trivial by Lemma~\ref{corollary_one_side_along} with $c=(1+a)\slash2$. Otherwise,
do Algorithm~\ref{alg_1} on $\mathcal A(U)$ and we get a string of arrays $\mathcal A(U)=\mathcal A_{[0]}$, $\mathcal A_{[1]}, \ldots, \mathcal A_{[\tau]}$ such that $\mathcal A_{[i]}$  is an ancestor of $\mathcal A_{[i-1]}$ by $\theta_i$ for each $i\in [\tau]$, and $\mathcal A_{[\tau]}$ is monotonic by Lemma~\ref{string_lemma_3}. 

For any $i\in [0, \tau]$, define $\alpha_i$ as the small one between the two largest elements in $\mathcal A_{[i]+}$ and $\mathcal A_{[i]-}$, respectively; define $\sigma_i$ as the sum of elements in $\mathcal A_{[i]+}$. For any $i\in [\tau]$, define $m_i=\lfloor\alpha_{i-1}\slash\sigma_i\rfloor$.

By  Theorem~\ref{thm_induc_link_lemma} (3), $\alpha_{i-1}$ approximates the largest element in $\mathcal A(U^{(i-1)})$, and hence it approximates the largest element in $U^{(i-1)}$. Moreover, for any element $v\in \mathcal A_{[i]}$, by the $(a, b)$-plasticity of $R$, $\log v=O(\log^a\alpha_{i-1})$, which leads to $\log \sigma_i = O(\log^a\alpha_{i-1})$. So $m_i$ goes to infinity with $m$, and $\log\alpha_i= O(\log^a m_i)$ for each $i\in [\tau]$.

We prove by induction on $j\in [0,\tau]$ that each $\mathcal A_{[\tau-j]}$ is $c_j$-good under location $m_{\tau-j}$ for some constant $c_j$. For $j=0$, $\mathcal A_{[\tau]} \in \mathcal{C}$, i.e.,  $\mathcal A_{[\tau]}$ is monotonic by Lemma~\ref{string_lemma_3}. From the analysis above, for any $v\in\mathcal A_{[\tau]}$, $v\in [2^{O(\log^a m_{\tau})}]$. By Lemma~\ref{corollary_one_side_along}, $\mathcal A_{[\tau]}$ is $c_0=\frac{1+a}2$-good under location $m_{\tau}=\lfloor\alpha_{\tau-1}\slash \sigma_{\tau}\rfloor$.

For $j=1$, consider the link from $\mathcal A_{[\tau-1]}$ to $\mathcal A_{[\tau]}$.  From Theorem~\ref{thm_induc_link_lemma} (4), $\mathcal A_{[\tau-1]}$ is $(a, ba^{\tau-1})$-feasible.  Applying Lemma~\ref{lemma_linkage}, i.e., the link lemma, by letting $ba^{\tau-1}$ as the new $b$, $\mathcal A_{[\tau-1]}$ is $c_1$-good under location $m_{\tau-1}$, where $c_1= 1-a+a\max\{a, c_0\}\in (0, 1)$ by Remark~\ref{remark_of_link_lemma}.

Iteratively define $c_i=1-a+a \max\{a, c_{i-1}\}$ for any $i\in [2,\tau]$. Hence $c_{\tau}$ is determined only by parameters $\tau$ and $a$. Suppose we have proved that for any $j<\ell$ with $\ell \in [2,\tau-1]$, $\mathcal A_{[\tau-j]}$ as an $(a, ba^{\tau-j})$-feasible array with respect to $m$ is $c_j$-good under location $m_{\tau-j}$. Consider $\mathcal A_{[\tau-\ell]}$.

From Theorem~\ref{thm_induc_link_lemma} (4), $\mathcal A_{[\tau-\ell]}$ is $(a, ba^{\tau-\ell})$-feasible. Moreover, $\mathcal A_{[\tau-\ell+1]}$ is $c_{\ell-1}$-good under location $m_{\tau-\ell+1}=\lfloor\alpha_{\tau-\ell}\slash\sigma_{\tau-\ell+1}\rfloor$ by the induction hypothesis. So it is feasible to use the link lemma from $\mathcal A_{[\tau-\ell]}$ to $\mathcal A_{[\tau-\ell+1]}$. This means for any location $m'$ satisfying $\log \alpha_{\tau-\ell}=O(\log^a m')$, $\mathcal A_{[\tau-\ell]}$ is $c_\ell$-good under location $m'$. Since $\log \alpha_{\tau-\ell}=O(\log^a m_{\tau-\ell})$ when $\ell< \tau$, $\mathcal A_{[\tau-\ell]}$ is $c_{\ell}$-good under location $m_{\tau-\ell}$.


As a result, the $c_\ell$-goodness of $\mathcal A_{[\tau-\ell]}$ under location $m_{\tau-\ell}$ is right until $\ell=\tau-1$. For $\ell=\tau-1$, we have $m_{\tau-\ell}=m_1=\Theta(\max U\slash \sigma_1)$. By the $(a, b)$-plasticity of $R$, 
$\log \alpha_0=\Theta(\log (\max U))=O(\log^a m)$.
From the link lemma, $\mathcal A(U)=\mathcal A_{[0]}$ is $c_{\tau}$-good under location $m$, which means that by choosing $c=c_{\tau}$, there always exists an $\mathcal L(U)$-free set $M\subset [m]$ such that $|M|=\frac m {2^O(\log^c m)}$.
\end{proof}



\section{Concluding Remarks}\label{sec_conclusion}
This paper gives a full positive answer to Question~\ref{question_blackburn} when $N\in [u, 2u-3]$. To see this for any $N\in [u, 2u-3]$, on one hand, if $t\ge 3$ or $\min\{w_1, w_2\}>1$, from Theorem~\ref{thm_all} and the monotonicity of $C(N, q, \{w_1, \ldots, w_t\})$, $$C(N, q, \{w_1, \ldots, w_t\})\ge C(u, q, \{w_1, \ldots, w_t\})> q^{2-o(1)}.$$
{On the other hand, if $t=2$ and $\{w_1, w_2\}=\{1, w\}$ for some $w$, from the Johnson-type bound~\cite{shangguan2016separating} and the Reed-Solomn code construction~\cite{blackburn2003frameproof}, $C(N, q, \{1, w\})=\Theta(q^2)$. It is interesting to  study  Question~\ref{question_blackburn} for more $N$.

We have proved that for any given $N, t$ and type $\{w_1, \ldots, w_t\}$ with $N\in [u, 2u-3]$, either $C(N, q, \{w_1, \ldots, w_t\})=q^{2-o(1)}$ or $\Theta(q^2)$. However, it is  determined whether $C(N, q, \{w_1, \ldots, w_t\})=q^{2-o(1)}$ or $\Theta(q^2)$ completely only when $N=u$. It is also interesting to study on the following problem.
\begin{problem}\label{problem_1}
Determine whether $C(N, q, \{w_1, \ldots, w_t\})=q^{2-o(1)}$ or $\Theta(q^2)$ completely for any given $N, t$ and type $\{w_1, \ldots, w_t\}$ with $N\in [u+1, 2u-3]$.
\end{problem}
Results of Problem~\ref{problem_1} are seldom. Even for the case $t=2$ there is no  complete answer. When $\min\{w_1, w_2\}=1$, it has been mentioned that $C(N, q, \{1, w\})=\Theta(q^2)$ for any $N\in [u, 2u-3]$. When $\min\{w_1, w_2\}=2$, we can use the same method as in \cite{blackburn2003frameproof} to get $C(N, q, \{2, u-2\})=\Theta(q^2)$ with $N=2u-3$ for all fixed $u\ge 4$. The first unsolved case for $t=2$ is $C(6, q, \{2, 3\})$. As a probable starting point for solving Problem~\ref{problem_1}, one can try to solve all $t=2$ cases. For the convenience of readers, we list  below  all cases of $C(N, q, \{2, u-2\})$ with  $N\in [u, 2u-3]$ for $u\geq 4$, in which each $C(N, q, \{w_1, w_2\})$ is abbreviated as $(N; \{w_1, w_2\})$.
}

\begin{table}[h]
\centering
\begin{tabular}{cccccc}
$\mathbf{(4; \{2, 2\})}$& $\mathbf{(5; \{2, 2\})}$& & & & \\
$\mathbf{(5; \{2, 3\})}$& (6; \{2, 3\})& $\mathbf{(7; \{2, 3\})}$& & & \\
$\mathbf{(6; \{2, 4\})}$& (7; \{2, 4\})& (8; \{2, 4\})& $\mathbf{(9; \{2, 4\})}$& & \\
$\mathbf{(7; \{2, 5\})}$& (8; \{2, 5\})& (9; \{2, 5\})& (10; \{2, 5\}) & $\mathbf{(11; \{2, 5\})}$& \\
\vdots &     \vdots &            \vdots &        \vdots &   $\cdots$    & $\ddots$
\end{tabular}
\caption{Cases of $C(N, q, \{2, w\})$}
\end{table}

In the table, the first column is the case when $N=u$ and hence the values are all $o(q^2)$; in each row, all right-most ones are the case when
 $N=2u-3$ and hence the values are all $\Theta(q^2)$.
From the monotonicity, $C(N, q, \{w_1, w_2\})\le C(N+1, q, \{w_1, w_2\})$. So values are increasing from left to right in each row. From the Johnson-type bound \cite{shangguan2016separating}, $C(N, q, \{w_1, w_2\})\le q+ C(N-1, q, \{w_1, w_2-1\})$. So for each column in the table the orders are non-increasing from top to the bottom.

Therefore, if one can prove that one member $(N; \{2, w\})$ is of size $o(q^2)$, all its lower left part in the table are all of sizes $o(q^2)$; if one can prove that one member $(N; \{2, w\})$ is of size $\Theta(q^2)$, all its upper right part in the table are all of sizes $\Theta(q^2)$.



\vskip 10pt

\end{document}